\documentclass[11pt]{amsart}

\usepackage{amsfonts,amsmath,amssymb,amsthm,mathrsfs,url,xcolor,latexsym,rawfonts,graphicx}
\usepackage{hyperref}
\usepackage{enumerate} 
\usepackage[]{caption2}
\usepackage{accents}

\textwidth = 6.0 in
 \textheight = 8.5in
 \oddsidemargin = 0.30 in
\evensidemargin = 0.30 in
 \voffset=-30pt

\theoremstyle{plain}
  \newtheorem{theorem}{Theorem}[section]
  \newtheorem{lemma}{Lemma}[section]
  \newtheorem{proposition}{Proposition}[section]
  
  \newtheorem{corollary}{Corollary}[section]
  \newtheorem{definition}{Definition}[section]
  \newtheorem{remark}{Remark}[section]

\newcommand{\U}{\overline{U}}

   \newcommand{\beqn}{\begin{eqnarray}}
   \newcommand{\eeqn}{\end{eqnarray}}
   \newcommand{\beqs}{\begin{eqnarray*}}
   \newcommand{\eeqs}{\end{eqnarray*}}
   \newcommand{\ban}{\begin{eqnarray*}}
   \newcommand{\nan}{\end{eqnarray*}}
   \newcommand{\beq}{\begin{equation}}
   \newcommand{\eeq}{\end{equation}}
   
   \newcommand{\p}{\partial}
\newcommand{\eps}{\varepsilon}

\newcommand{\Om}{\Omega}

\newcommand{\bom}{{\overline\Om}}

\renewcommand{\det}{\mbox{det}}

\newcommand{\R}{\mathbb{R}}

\numberwithin{equation}{section}




  \numberwithin{equation}{section}
  \numberwithin{figure}{section}


\parskip = 0.25in

\begin{document}

\title[Global regularity in the Monge-Amp\`ere obstacle problem]
{\textbf{Global regularity in the Monge-Amp\`ere obstacle problem}}

\author{Shibing Chen}
\address{Shibing Chen, School of Mathematical Sciences,
University of Science and Technology of China,
Hefei, 230026, P.R. China.}
\email{chenshib@ustc.edu.cn}

\author[J. Liu]{Jiakun Liu}
\address
	{Jiakun Liu, School of Mathematics and Applied Statistics,
	University of Wollongong,
	Wollongong, NSW 2522, AUSTRALIA}
\email{jiakunl@uow.edu.au}

\author[X. Wang]{Xianduo Wang}
\address{Xianduo Wang, Department of Mathematical Sciences, Tsinghua University, Beijing 100084, China}
\email{xd-wang18@mails.tsinghua.edu.cn}

\subjclass[2000]{35J96, 35J25, 35B65.}

\keywords{Optimal transportation, Monge-Amp\`ere equation, free boundary}

\thanks{Research of Chen was supported by National Key R\&D program of China 2022YFA1005400, 2020YFA0713100, National Science Fund for Distinguished Young Scholars (No. 12225111), and NSFC No. 12141105.
Research of Liu was supported by ARC DP200101084, DP230100499 and FT220100368.}


\date{\today}

\dedicatory{}

\begin{abstract} 
In this paper, we establish the global $W^{2,p}$ estimate for the Monge-Ampère obstacle problem: $(Du)_{\sharp}f\chi{_{\{u>\frac{1}{2}|x|^2\}}}=g$, where $f$ and $g$ are positive continuous functions supported in disjoint bounded $C^2$ uniformly convex domains $\overline{\Omega}$ and $\overline{\Omega^*}$, respectively. Furthermore, we assume that $\int_{\Omega}f\geq \int_{\Omega^*}g$. The main result shows that $Du:\overline U\rightarrow\overline{\Omega^*}$, where $ U=\{u>\frac{1}{2}|x|^2\}$, is a $W^{1, p}$ diffeomorphism for any $p\in(1,\infty)$. Previously, it was only known to be a continuous homeomorphism according to Caffarelli and McCann \cite{CM}. 
It is worth noting that our result is sharp, as we can construct examples showing that even with the additional assumption of smooth densities, the optimal map $Du$ is not Lipschitz.
This obstacle problem arises naturally in optimal partial transportation.

\end{abstract}

\maketitle

\baselineskip=16.4pt
\parskip=3pt

\section{introduction}
In this paper we study the convex solution to the Monge-Amp\`ere obstacle problem 
\begin{equation}\label{mao}
(Du)_{\sharp}f\chi_{_{\{u>\frac{1}{2}|x|^2\}}}=g,
\end{equation}
where $0\leq f, g\in L^1(\mathbb{R}^n)$ are supported in the closures of bounded convex domains
$\Omega, \Omega^*$, respectively. We also assume that $\overline{\Omega}$ and $\overline{\Omega^*}$ are disjoint, and that $\int_{\Omega}f\geq \int_{\Omega^*}g.$
The equation \eqref{mao} is understood in Brenier's sense, namely, a convex function $u$ is a solution of \eqref{mao} if and only if for each bounded continuous function $\varphi:\mathbb{R}^n\rightarrow \mathbb{R},$ we have 
$$\int_{_{\{u>\frac{1}{2}|x|^2\}}} \varphi(Du(x))f(x)dx=\int_{\mathbb{R}^n}\varphi(y)g(y)dy.$$

This obstacle problem arises in the optimal partial transport problem.
A non-negative, finite Borel measure $\gamma$ on $\mathbb{R}^n\times \mathbb{R}^n$ is called 
a transport plan (with mass $m:=\int_{\Omega^*}g$) from the distribution $(\Omega, f)$ to the distribution $(\Omega^*, g)$,
if $\gamma(\R^n\times\R^n)  =m$ and 
\beq\label{tp}
{\begin{split}
       \gamma(A\times \mathbb{R}^n) \leq \int_{A\cap\Om} f(x)\,dx, \ \ \ 
       \gamma(\mathbb{R}^n \times A) \leq \int_{A\cap\Om^*} g(y)\,dy
       \end{split}}
\eeq
for any Borel set $A\subset\R^n$.
A transport plan $\gamma$ is {\it optimal} if it minimises the cost functional 
\begin{equation} \label{mini}
 \int_{\mathbb{R}^n \times \mathbb{R}^n} |x-y|^2\, d\gamma(x,y) 
\end{equation}    
among all transport plans. 
In \cite{CM}, Caffarelli and McCann have shown the existence and uniqueness of solutions to both
\eqref{mao} and \eqref{mini}, moreover, the optimal plan of \eqref{mini} can be characterised as follows 
	\beq\label{optmeas}
		\gamma =(\text{Id} \times Du)_{\#}f\chi_{U}, 
	\eeq
where $u$ is a convex solution to \eqref{mao} and $U:=\{u>\frac{1}{2}|x|^2\}\cap \Omega.$

  In \cite{CM}, the authors have established the interior $C^{1,\alpha}$ regularity of free boundary 
 $\mathcal{F}:=\partial \{u>\frac{1}{2}|x|^2\}\cap \Omega$,  assuming the domains are strictly convex and densities are bounded from below and above. Then, in \cite{CLWf}, the authors proved the interior higher order regularity of free boundary assuming the domains are smooth uniformly convex and densities are positive smooth. For the global regularity of $\mathcal{F}$ or $u,$
  it was proved in \cite{CM} that the free boundary is $C^1$ regular up to its intersection with the fixed boundary, and that $Du$ is a $C^0$ homeomorphism between $\overline{U}$ and $\overline{\Omega^*}.$ However, nothing more is known concerning the finer regularity of free boundary up to its intersection with the fixed boundary and the global regularity of the solution $u$ in $U.$ There are many basic questions left open, for instance it is even not known whether the active region is a Lipschitz domain.
  
  In this work, we investigate the above mentioned questions systematically. As an initial step, we first prove that the free boundary always intersects the fixed boundary in a nice way.
  Indeed, it is mentioned in \cite[below (7.2)]{CM} that it is not clear that whether the  nontransverse intersection points between the free boundary and the fixed boundary exist or not. In \cite{CM}, the set of  nontransverse intersection points
is specified as
\begin{equation}\label{ntpoint}
\partial_{nt}\Omega:=
\{x\in \partial\Omega\cap\overline{\Omega\cap\partial U} :  \langle Du(x)-x, z \rangle \leq 0\ \ \forall z\in \Omega\}.
\end{equation}
Note that in dimension two, the active domain $U$ looks like a cusp at such a nontransverse intersection point.
It is also pointed out in \cite{CM} that it is not clear whether the free boundary is $C^{1,\alpha'}$ up to such points. 
This issue was later studied by Indrei \cite{I}, in which he proved $\partial_{nt}\Omega$ is of Hausdorff dimension at most $n-2,$ assuming  $\Omega, \Omega^*\in C^{1,1}$ and uniformly convex. 
In this paper we resolve this question as follows.

\begin{theorem}\label{main2}
Let $\Omega, \Omega^* \subset \R^n$ be two bounded, strictly convex domains, with $\overline{\Omega} \cap \overline{\Omega^*}=\emptyset$. Assume that $\lambda^{-1}<f,g<\lambda$  in $\Omega, \Omega^*$ respectively for some positive constant $\lambda.$
Let $u$ be a convex solution to \eqref{mao}.
 Then, $\partial_{nt}\Omega=\emptyset$, in particular, $\partial U$ is globally Lipschitz.
\end{theorem}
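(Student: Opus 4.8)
The plan is to argue by contradiction: suppose there exists a nontransverse intersection point $x_0 \in \partial_{nt}\Omega$. By \eqref{ntpoint}, the vector $y_0 - x_0$, where $y_0 = Du(x_0) \in \overline{\Omega^*}$, is an outer normal to $\Omega$ at $x_0$ in the sense that $\langle y_0 - x_0, z - x_0\rangle \le 0$ for all $z \in \Omega$. Since $\overline\Omega$ and $\overline{\Omega^*}$ are disjoint, $y_0 \ne x_0$, so this is a genuine supporting hyperplane condition. After a rotation and translation we may assume $x_0 = 0$, the inner normal to $\Omega$ at $0$ points in the direction $e_n$, and $y_0$ lies on the negative $e_n$-axis. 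Near $0$ the active region $U = \{u > \frac12|x|^2\} \cap \Omega$ is squeezed between the free boundary $\mathcal F$ and the fixed boundary $\partial\Omega$, both of which are tangent to $\{x_n = 0\}$ at $0$ — the fixed boundary because $e_n$ is its normal, and the free boundary because at a nontransverse point the gradient jump $Du - \mathrm{Id}$ across $\mathcal F$ is parallel to $y_0 - x_0$, forcing $\mathcal F$ to be tangent to the same plane (this is exactly the cusp picture mentioned after \eqref{ntpoint}).

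The key quantitative input will be the localization and non-degeneracy estimates for the optimal map coming from the Monge–Ampère structure. On one hand, because $f, g$ are bounded between $\lambda^{-1}$ and $\lambda$ and $\int_\Omega f \ge \int_{\Omega^*} g$, the Brenier solution $u$ satisfies $\det D^2 u = f/g(Du)$ in the Aleksandrov sense on $U$, so $u$ is a genuine (degenerate-free) Monge–Ampère solution there with bounded right-hand side; standard strict convexity and $C^{1,\alpha}$ interior estimates apply, and Caffarelli–McCann's global $C^0$ homeomorphism property tells us $Du$ maps $\overline U$ onto $\overline{\Omega^*}$. I would combine this with the geometry of $\Omega^*$: since $\Omega^*$ is strictly convex and $y_0 \in \partial\Omega^*$ is the image of $x_0$, for points $x \in U$ near $x_0$ the images $Du(x)$ must stay inside $\overline{\Omega^*}$, which constrains the second-order behavior of $u$ (it is the subgradient image that must land in a strictly convex set touching a supporting plane at $y_0$). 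The plan is to translate "$U$ has a cusp at $x_0$" into a contradiction with the measure balance: the mass $\int_{U \cap B_r(x_0)} f$ is comparable to $|U \cap B_r(x_0)|$, while its image under $Du$ must have $g$-measure equal to it and be contained in $\overline{\Omega^*} \cap (\text{a thin neighborhood of a supporting hyperplane at } y_0)$, and one shows the latter is too small — essentially, a cusp-shaped domain cannot be mapped by a nondegenerate Monge–Ampère map onto a region of comparable measure sitting against the boundary of a strictly convex target.

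More concretely, the route I would take is: (1) establish a lower bound of the form $|U \cap B_r(x_0)| \ge c\, r^{n}$ fails — i.e., show the cusp forces $|U \cap B_r(x_0)| = o(r^n)$, in fact like $r^{n+1}$ or worse, using that both boundaries are tangent to $\{x_n = 0\}$ and $\partial\Omega$ is uniformly (strictly) convex so it curves away, while $\mathcal F$ at a nontransverse point must lie on the $\Omega$-side and be tangent there; (2) use the Monge–Ampère equation with $\det D^2 u$ bounded above and below to get a two-sided volume comparison $|Du(U\cap B_r(x_0))| \sim |U \cap B_r(x_0)|$; (3) observe $Du(U \cap B_r(x_0))$ is a connected subset of $\overline{\Omega^*}$ containing $y_0 \in \partial\Omega^*$ on its boundary and, by continuity and the homeomorphism property, contains a full neighborhood (relative to $\overline{\Omega^*}$) of $y_0$ of some radius $\rho(r) \to 0$; by strict convexity of $\Omega^*$, such a relative neighborhood of size $\rho$ has volume $\gtrsim \rho^{n}$ — wait, one must be careful: a boundary cap of a uniformly convex domain of "radius" $\rho$ has volume comparable to $\rho^{n}$ as well, so (4) the real contradiction comes from matching the scaling rates: the cusp side decays strictly faster in $r$ than the target cap does in $\rho$, once one pins down via the gradient bound (Lipschitz-type control on $Du$ near the free boundary, available since $Du$ extends continuously and $Du(x_0) = y_0$) the relation $\rho(r) \gtrsim r$. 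Combining, $r^{n+1} \gtrsim |U \cap B_r| = |Du(U\cap B_r)| \gtrsim \rho(r)^n \gtrsim r^n$, which is absurd as $r \to 0$. Finally, $\partial_{nt}\Omega = \emptyset$ together with the already-known transverse $C^1$ intersection from \cite{CM} upgrades $\partial U$ to globally Lipschitz, since away from $\mathcal F \cap \partial\Omega$ both pieces are smooth and at transverse intersection points the two boundaries meet at a positive angle.

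The main obstacle I anticipate is step (3)–(4): quantifying precisely how much of $\overline{\Omega^*}$ near $y_0$ must be covered by $Du(U \cap B_r(x_0))$ and establishing the lower bound $\rho(r) \gtrsim r$. This requires a modulus-of-continuity or Lipschitz estimate for the inverse map $Du^{-1}$ near the boundary point $y_0$ — equivalently an upper bound on the oscillation of $Du$ on $U \cap B_r(x_0)$ — which does not follow from soft arguments and is exactly where the nondegeneracy $\lambda^{-1} \le f, g \le \lambda$ and the uniform convexity of $\Omega, \Omega^*$ must be used, likely via a barrier/comparison argument for $u$ built from the supporting hyperplanes at $x_0$ and $y_0$. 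An alternative, possibly cleaner, implementation is to avoid inverse-map estimates and instead work with the convex function $u^* $ (Legendre transform) near $y_0$, whose active region is $Du(U)$ and which solves the dual obstacle-type problem; then the cusp at $x_0$ for $u$ corresponds to a flat-contact configuration for $u^*$ at $y_0$, and one derives the contradiction from the uniform convexity of $\Omega$ playing the role of a strictly convex target for $u^*$. I would try both and keep whichever gives the cleaner constants.
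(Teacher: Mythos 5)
Your instinct---contradiction via a measure-balance argument, exploiting the cusp geometry forced by nontransversality---matches the paper, and your ``alternative'' (working near $y_0$ on the target side rather than near $x_0$ in $U$) is in fact the route the paper takes. But your primary plan, steps (1)--(4), has two gaps, one of which you identify yourself. In step (1) you want $|U\cap B_r(x_0)|\lesssim r^{n+1}$, but near $x_0\in\overline{\mathcal F}\cap\partial\Omega$ you only have $C^1$ information on $\mathcal F$ from \cite{CM} (the $C^{1,\alpha'}$ estimate is interior); without a quantitative modulus on the normal to $\mathcal F$ near $x_0$ you cannot pin down how fast $\mathcal F$ separates from the tangent plane, so the exponent $n+1$ is not established. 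Step (4) is the more serious issue: the bound $\rho(r)\gtrsim r$ amounts to a Lipschitz/oscillation estimate for $Du$ (or its inverse) near $x_0$, and as you note this does not follow from soft arguments. Without both (1) and (4) the scaling contradiction never closes.

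The paper avoids both difficulties by never estimating $|U\cap B_r|$ or the oscillation of $Du$ at all. It works on the target side: take $G_\epsilon=B_\epsilon(y_0)\cap\Omega^*$ and estimate the image $Dv(G_\epsilon)$. The single key observation is that the interior ball property of Caffarelli--McCann (Lemma \ref{intball}) gives, for every $p\in G_\epsilon$ with $q=Dv(p)$, the inequality $|p-q|\le |p-x_0|$, since otherwise $x_0\in B_{|p-q|}(p)\cap\Omega\subset U$, contradicting $x_0\in\partial\Omega\cap\overline{\Omega\cap\partial U}$. Combining $|p-q|\le |p|$ (after normalizing $x_0=0$) with the nontransversality half-space condition $\Omega\subset\{x_n\le 0\}$ and elementary algebra shows that $q$ lies in a slab of thickness $O(\epsilon^2)$ in the $e_n$-direction and $O(\epsilon)$ tangentially, hence $|Dv(G_\epsilon)|\lesssim\epsilon^{n+1}$, while $|G_\epsilon|\gtrsim\epsilon^n$ by convexity of $\Omega^*$. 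This contradicts the two-sided density bounds. In other words, the interior ball property already encodes exactly the modulus-of-continuity information you were trying to fabricate in step (4); once you use it on the $\Omega^*$-side ball, the whole argument is an algebraic computation and no regularity estimate for $\mathcal F$ or $Du$ near $x_0$ is needed. I would recommend you redo the proof starting from $G_\epsilon$ and Lemma \ref{intball}; your Legendre-transform idea is pointing in the right direction, you just need to replace the missing Lipschitz estimate with the interior ball inequality.
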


A direct consequence of Theorem \ref{main2} is the $C^{1,\alpha}$ regularity of $\mathcal{F}$ up to its intersection with the fixed boundary.
\begin{corollary}
Under the same conditions as in Theorem \ref{main2}, we have that $u\in C^{1,\alpha}(\overline{U})$ and $\overline{\mathcal{F}}$ is $C^{1,\alpha}$ regular for some $\alpha\in (0, 1).$
\end{corollary}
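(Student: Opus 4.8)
The plan is to obtain the corollary by a localization argument: one establishes $u\in C^{1,\alpha}$ and $\mathcal{F}\in C^{1,\alpha}$ in a neighbourhood of each point of $\overline U$, in three cases, and then patches. Near an interior free boundary point (a point of $\mathcal{F}\cap\Omega$), the $C^{1,\alpha}$ regularity of $\mathcal{F}$ is exactly the interior regularity of Caffarelli-McCann \cite{CM}, which holds under the present hypotheses; and since $u$ is convex and satisfies $g(Du)\det D^2u=f$ in $U$ with right-hand side pinched between $\lambda^{-2}$ and $\lambda^{2}$, the same analysis yields $u\in C^{1,\alpha}$ locally up to $\mathcal{F}\cap\Omega$ from the side of $U$ (with the gradient understood as the one-sided limit along $\mathcal{F}$, since $u$ has a crease there, $Du$ jumping from the identity on the inactive side to a value in $\partial\Omega^*$ on the active side). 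Near a point $x_0\in\partial\Omega\cap\partial U$ lying away from $\overline{\mathcal{F}}$, for small $r$ one has $U\cap B_r(x_0)=\Omega\cap B_r(x_0)$, a piece of the $C^2$ uniformly convex domain $\Omega$, while $Du(x_0)\in\partial\Omega^*$ with $\Omega^*$ convex; this is locally the second boundary value problem for the Monge-Amp\`ere equation with right-hand side bounded above and below, so Caffarelli's boundary regularity gives $u\in C^{1,\alpha}$ up to $\partial\Omega$ near $x_0$, equivalently $Du\in C^{1,\alpha}$ up to that portion of $\partial\Omega$.

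The remaining and main case is a transverse intersection point $x_0\in\partial\Omega\cap\overline{\mathcal{F}}$. By Theorem~\ref{main2}, $x_0\notin\partial_{nt}\Omega$, hence the intersection is transverse; by \cite{CM}, $\mathcal{F}$ is $C^1$ up to $x_0$, meets $\partial\Omega$ there at a positive angle, and $u\in C^1$ near $x_0$; since $\partial\Omega\cap\overline{\mathcal{F}}$ is compact and $\partial_{nt}\Omega=\emptyset$, this angle is bounded below uniformly. Thus near $x_0$ the active region $U$ is a Lipschitz wedge bounded by the $C^1$ hypersurface $\mathcal{F}$ and the $C^2$ hypersurface $\partial\Omega$, with opening angle bounded away from $0$. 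One then blows up the solution at $x_0$: transversality forces the blow-up limit to be the nondegenerate model solution on the corresponding wedge (carrying the obstacle condition $u=\frac{1}{2}|x|^2$ along the $\mathcal{F}$-face and $Du\in\partial\Omega^*$ along the $\partial\Omega$-face), and running the Caffarelli-McCann perturbation and iteration scheme for the free boundary, now localized at $x_0$ and exploiting the strict convexity of $\Omega$ together with the density pinching to control sections near the corner, upgrades the convergence to a $C^{1,\alpha}$ rate; this gives $u\in C^{1,\alpha}$ and $\mathcal{F}\in C^{1,\alpha}$ up to $x_0$.

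The hardest part is this third case: one must run a free boundary regularity argument at a corner where a $C^1$ free boundary meets a $C^2$ fixed boundary, so both the interior free boundary argument and the standard Monge-Amp\`ere boundary estimates (which are set up on convex domains) must be adapted to the mixed free/fixed Lipschitz geometry of $U$ near $x_0$. The point, and the reason the corollary is a \emph{direct} consequence of Theorem~\ref{main2}, is that $\partial_{nt}\Omega=\emptyset$ rules out cusps of $U$ and supplies a uniform two-sided cone condition at every intersection point, which is precisely the missing ingredient obstructing the passage from the $C^1$ statement of \cite{CM} to $C^{1,\alpha}$ up to the fixed boundary. Patching the local estimates from the three cases with the interior smoothness of $u$ in $U$ then yields $u\in C^{1,\alpha}(\overline U)$ and that $\overline{\mathcal{F}}$ is a $C^{1,\alpha}$ hypersurface with boundary.
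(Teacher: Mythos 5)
Your proposal takes a genuinely different route from the paper, and the route as written has real gaps.

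The paper does not argue at the corner in the $u$-picture at all. Its (implicit) proof of the corollary is dual: once Theorem~\ref{main2} rules out nontransverse intersections, the localisation of sub-level sets of $v$ extends to every $y\in\overline{\Omega^*}$ (this is Lemma~\ref{locallemma1}), so inside $S_h^c[v](y)$ the Monge-Amp\`ere measure of $v$ is supported only on the \emph{convex} set $\overline{\Omega^*}$ and is pinched there. At that point one is exactly in the Caffarelli--McCann setting of \cite[Thm.~7.13]{CM}: the strict-convexity estimate \eqref{strictconvex} holds at every $y_0\in\overline{\Omega^*}$ with some $\beta>1$, and dualising gives $u\in C^{1,\alpha}(\overline U)$; the $C^{1,\alpha}$ regularity of $\overline{\mathcal F}$ then follows from the normal formula \eqref{normalformular}. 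The whole point of this route is that it never has to confront the mixed free/fixed Lipschitz corner of $U$: the corner is invisible on the $v$ side because $\Omega^*$ is a single convex domain.

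Your cases 2 and 3 both use $C^2$ regularity of $\partial\Omega$ (``a piece of the $C^2$ uniformly convex domain $\Omega$,'' ``the $C^2$ hypersurface $\partial\Omega$''), but the corollary inherits the hypotheses of Theorem~\ref{main2}: $\Omega,\Omega^*$ are only \emph{strictly convex} and $f,g$ are only bounded measurable between $\lambda^{-1}$ and $\lambda$. That mismatch matters: with merely bounded measurable densities a blow-up of $u$ at a corner point produces a limit whose Monge-Amp\`ere measure has no better structure than the original pinching, so there is no ``nondegenerate model solution on the wedge'' to converge to, and no Liouville-type rigidity to invoke. The blow-up/compactness/iteration machinery you gesture at in case 3 is exactly the technology the paper deploys in Sections 4--6, but there it is used for the much stronger obliqueness, $C^{1,1-\epsilon}$ and $W^{2,p}$ conclusions, and precisely \emph{under} the additional hypotheses of Theorem~\ref{thmW2p} ($C^2$ uniformly convex domains and continuous densities), because those hypotheses are what make the blow-up limits classifiable. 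For the corollary as stated they are unavailable, so your case 3 is not a proof but a plan that would not close under the given hypotheses. Your case 2 has the same issue in milder form: Caffarelli's $C^{1,\alpha}$ boundary regularity for the second BVP also needs more than strict convexity if you try to run it on the $u$-side on the non-convex domain $U$; this is again why the paper reduces to the convex target $\Omega^*$ via $v$ and the localisation lemma.

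To summarise the correct high-level observation you did make: $\partial_{nt}\Omega=\emptyset$ is indeed the single missing ingredient obstructing the passage from CM's $C^1$ to $C^{1,\alpha}$. But the mechanism by which it is exploited is not a corner blow-up in the $u$ picture; it is that transversality delivers the uniform localisation $S_h^c[v](y)\cap\overline\Omega=\emptyset$, after which \cite{CM}'s strict-convexity estimate for $v$ applies verbatim at every boundary point of $\Omega^*$, and the corollary follows by duality.
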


\begin{remark}
\emph{
If $\Om$ and $\Om^*$ partly overlap, namely if $\Omega\cap\Omega^*\neq\emptyset$,
Figalli \cite{AFi2, AFi}  proved that $\mathcal F$ and $\mathcal F^*$ are locally $C^1$ smooth 
away from the common region $\Omega\cap\Omega^*$.
Later, Indrei \cite{I}  improved the $C^1$ regularity to $C^{1,\alpha'}$, also away from $\Omega\cap\Omega^*$.
}
\end{remark}

To study the higher regularity of $u$ and $\mathcal{F},$ we further assume that the domains $\Omega, \Omega^*$ are $C^2$ and uniformly convex, and the densities $f\in C^0(\overline{\Omega}), g\in C^0(\overline{\Omega^*}).$
For a given point $x_0\in \partial U,$ denote $y_0:=Du(x_0)\in \partial\Omega^*.$
 If $x_0\in \partial \Omega\setminus \overline{\mathcal{F}},$ then since $\partial U, \partial\Omega^*$ are $C^2$ uniformly convex near $x_0, y_0$ respectively, we can apply the argument in \cite{CLW1} to show that $u$ is $C^{1, 1-\epsilon}$ regular near $x_0$ for $\epsilon>0$ as small as we want.
  If $x_0\in \mathcal{F},$ we can apply the argument in \cite{CLWf} to show the corresponding estimate.
 If $x_0\in \overline{\mathcal{F}}\cap \partial\Omega,$ then $U$ is only Lipschitz at $x_0,$ moreover $\partial U$ contains fixed boundary part (convex) and free boundary part (non-convex).  Low regularity and nonconvexity of $\partial U$ near $x_0$ bring serious difficulty to prove the $C^{1, 1-\epsilon}$ regularity of $u$. In this paper, we overcome this difficulty by developing a very delicate blow-up argument at such points, and finally we establish the following theorem.  
  \begin{theorem}\label{thmW2p}
  Besides the assumptions in Theorem \ref{main2}, assume further that $\Omega, \Omega^*$ are $C^2$ and uniformly convex, and that the densities $f\in C^0(\overline{\Omega}), g\in C^0(\overline{\Omega^*}).$ Then $u\in W^{2,p}(\overline{U})$ for any $p>1.$ Moreover, the
  free boundary $\mathcal{F}$ is $C^{1, 1-\epsilon}$ up to its intersection with the fixed boundary for any $\epsilon\in (0, 1).$
  \end{theorem}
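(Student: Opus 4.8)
The plan is to localize near a boundary point $x_0\in\partial U$ and split into three cases according to the position of $x_0$, as indicated in the discussion preceding the theorem, so that the only genuinely new work is the analysis at $x_0\in\overline{\mathcal F}\cap\partial\Omega$. On the interior $U\cap\Omega$ the classical Caffarelli theory (together with the interior free-boundary regularity of \cite{CM,CLWf}) already gives $u\in W^{2,p}_{loc}$, so the entire issue is a global (up-to-the-boundary) $W^{2,p}$ bound, which by a standard covering argument reduces to a uniform local estimate in a neighborhood of each boundary point. For $x_0\in\partial\Omega\setminus\overline{\mathcal F}$, near $x_0$ the set $U$ coincides with $\Omega$, which is $C^2$ uniformly convex, and $y_0=Du(x_0)\in\partial\Omega^*$ with $\Omega^*$ also $C^2$ uniformly convex; the obstacle constraint is inactive there, so $u$ solves a genuine Monge–Amp\`ere equation with continuous density and the boundary-regularity machinery of \cite{CLW1} applies verbatim to yield $u\in C^{1,1-\epsilon}$, hence $W^{2,p}$, near $x_0$. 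For $x_0\in\mathcal F$ (a transverse free-boundary point not meeting $\partial\Omega$), one invokes \cite{CLWf}: there $\partial U$ is $C^\infty$ uniformly convex and $\partial\Omega^*$ likewise, so again one is reduced to a boundary Monge–Amp\`ere estimate giving the same $C^{1,1-\epsilon}$ conclusion.

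The crux is the remaining case $x_0\in\overline{\mathcal F}\cap\partial\Omega$. By Theorem \ref{main2} we already know $\partial_{nt}\Omega=\emptyset$, so the intersection at $x_0$ is transverse and $\partial U$ is Lipschitz at $x_0$; moreover the Corollary gives $u\in C^{1,\alpha}(\overline U)$ for some $\alpha$, so $Du$ is continuous up to $x_0$ with $Du(x_0)=y_0\in\partial\Omega^*$. The difficulty is that near $x_0$ the domain $U$ is the union of a convex piece of $\partial\Omega$ and a non-convex free-boundary piece $\mathcal F$, so neither the convex-domain boundary estimates nor the free-boundary estimates apply directly. I would proceed by a blow-up (rescaling) argument: let $h\to0$ and rescale $u$ at $x_0$ by the affine maps that normalize the sections $S_h(x_0)=\{u<u(x_0)+Du(x_0)\cdot(x-x_0)+h\}$, using the engulfing/volume estimates for Monge–Amp\`ere sections. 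One must show the rescaled domains $U_h$ converge (in Hausdorff distance) to a limiting convex cone-like region which, thanks to $\partial_{nt}\Omega=\emptyset$ and the uniform convexity of $\Omega,\Omega^*$, is comparable to a half-space intersected with a slab; the rescaled solutions converge locally uniformly to a global solution $u_\infty$ of a Monge–Amp\`ere equation (with constant right-hand side, since $f,g$ are continuous) on this limiting region, with the rescaled free boundary converging to a hyperplane. A Liouville-type classification then forces $u_\infty$ to be a quadratic polynomial, giving the key decay: $\mathrm{osc}_{S_h} (D^2u)$, or rather the deviation of $S_h$ from an ellipsoid, decays geometrically in $h$. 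Iterating this decay yields $u\in C^{1,1-\epsilon}$ at $x_0$ (equivalently pointwise $C^{1,1-\epsilon}$ with uniform constants over a neighborhood), and the $C^{1,1-\epsilon}$ regularity of $\mathcal F$ up to $x_0$ follows from the regularity of $u$ together with the implicit-function description $\mathcal F=\{u=\tfrac12|x|^2\}$ and the nondegeneracy $|Du(x)-x|\ge c>0$ on $\overline{\mathcal F}\cap\partial\Omega$ guaranteed by transversality.

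Assembling the three cases, a finite cover of $\partial U$ by such neighborhoods gives $u\in C^{1,1-\epsilon}(\overline U)$ for every $\epsilon\in(0,1)$, and in particular $Du$ is globally H\"older; combined with the interior $W^{2,p}_{loc}$ bound and the boundary estimates above, $|D^2u|$ is controlled in $L^p(\overline U)$ for every $p>1$ by the standard argument (the section $S_h(x)$ has volume comparable to $h^{n/2}$ uniformly up to the boundary, and $\fint_{S_h(x)}|D^2u|$ stays bounded), whence $u\in W^{2,p}(\overline U)$. The main obstacle, and where essentially all the technical effort lies, is controlling the geometry of the rescaled active regions $U_h$ near a non-transverse-free-looking corner and proving the Liouville classification on the limiting domain: one must rule out non-polynomial global solutions on a half-space-type region with a free-boundary condition on part of the boundary, which is exactly the step where the sharpness of the result (non-Lipschitz examples) makes the argument delicate — the decay exponent must be allowed to degenerate to $1-\epsilon$ and cannot be improved to $1$.
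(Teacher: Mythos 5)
Your outline recognizes the case split and the need for a blow-up at $x_0\in\overline{\mathcal F}\cap\partial\Omega$, but it misses the paper's central new ingredient and, as written, would lead to a conclusion that is actually false. Two concrete problems.

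First, the key technical step in the paper is not a Liouville classification but the \emph{obliqueness estimate} at intersection points (Proposition \ref{obli}): for $x_0\in\overline{\mathcal F}\cap\partial\Omega$ and $y_0=Du(x_0)$, both $\nu_{\Omega^*}(y_0)\cdot\nu_{\overline{\mathcal F}}(x_0)>0$ and $\nu_{\Omega^*}(y_0)\cdot\nu_{\Omega}(x_0)>0$. This is proved in Section 4 by two separate and rather intricate blow-up/contradiction arguments (subcases depending on whether $e_1\in H'$, the first and second blow-ups, etc.), and it is precisely what allows one to place a uniform cone $\mathcal C_{K,r}\subset U$ at $x_0$ (\eqref{doposi}) after an affine normalization. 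Without obliqueness, your ``limiting region comparable to a half-space intersected with a slab'' has no justification; the blow-up of $U$ could well degenerate in a direction that makes the limit fail to be a half-space. Your proposal never addresses this, and merely citing $\partial_{nt}\Omega=\emptyset$ (Theorem \ref{main2}) is not enough — transversality rules out cusps, but obliqueness is a quantitatively stronger statement about the relative positions of three different normals.

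Second, your contradiction scheme would overshoot. You propose to rescale $u$ at $x_0$, obtain a global limit $u_\infty$, and invoke a Liouville theorem forcing $u_\infty$ to be a quadratic polynomial, iterating to ``geometric decay.'' If such a geometric decay held at every scale, you would obtain $u\in C^{1,1}$ near $x_0$, contradicting the counterexample in Section 7 of the paper. The paper avoids this by proving only a one-step estimate on sections of the \emph{dual} potential $v$: Lemma \ref{ltc111} shows $c_h>C_\eta h^{1/2+\eta}$ via a single contradiction-blow-up (not an iteration), in which the limit $v_0$ solves a Monge--Amp\`ere equation on a half-space and satisfies only the quadratic bound $v_0\leq C|y|^2$ from \cite{CLW1}, not a Liouville rigidity. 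Working on $v$ rather than $u$ is essential: $\Omega^*$ is a $C^2$ uniformly convex domain, whereas $U$ is Lipschitz and non-convex near $x_0$, so the section geometry for $u$ is far harder to control — this is the very difficulty the paper circumvents by dualizing. Your last paragraph, in effect, identifies the hard step but does not supply the obliqueness mechanism that makes it tractable, nor does it explain how the Liouville step can be reconciled with the failure of $C^{1,1}$.
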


\begin{remark}
\emph{In the last section we will provide some example to show that the $W^{2,p}$ regularity of $u$ in
$\overline{U}$ is optimal in some sense. The ideas introduced in this paper can also be used to study the optimal transport problem when the target consists of two separated convex domains, which will be done in a forthcoming paper.
} 
\end{remark}

The rest of paper is organised as follows. 
In Section 2 we recall some results from \cite{C96, CM, CLW1} which will be used in subsequent sections.
In Section 3 we prove the Lipschitz regularity of the active region $U.$
Section 4, we establish the obliqueness estimate for points nearby the intersection of the free boundary and fixed boundary.
In section 5 and 6, we prove the global $C^{1, 1-\epsilon}$ estimate of $u$ and then establish the global $W^{2,p}$ estimate of $u$ in $U.$
In the last section, we construct some example showing that our result is sharp.


\vskip10pt
 
\section{Preliminaries}\label{S2}

\subsection{Potential functions}\label{s21}
Throughout the paper, we always assume that the densities $f, g$ satisfy
$\lambda^{-1}<f, g<\lambda$ in $\Omega, \Omega^*$, respectively, for a positive constant $\lambda$,  
and the domains  $\Omega, \Omega^*\subset \mathbb{R}^n$ are bounded, strictly convex and their closures are disjoint. The source is assumed to contain more mass than the target, namely,
$\int_{\Omega}f\geq \int_{\Omega^*}g.$ The active region is denoted by $U:=\left\{u> \frac{1}{2}|x|^2 \right\}\cap \Omega.$ 

Let $u:\mathbb{R}^n\rightarrow \mathbb{R}$ be a convex solution to \eqref{mao}. Replacing 
$u$ by $\max\{u, \frac{|x|^2}{2}\},$ we may assume that $u=\frac{|x|^2}{2}$ in $\Omega\setminus \overline{U}.$
Let $$v(y):= \sup_{x\in \Omega}  \left\{  y\cdot x-u(x) \right\} \  \ \text{for}\ y\in \mathbb{R}^n.$$
Then, 
 \begin{equation} \label{push}
 (Dv)_{\#}(g+f\chi_{_{\Omega\backslash \overline{U}}})) = f.
 \end{equation}
 We may also extend $u$ from $U$ to $\mathbb{R}^n$ in the following way
 \begin{equation}\label{pushu}
 \bar{u}(x):= \sup\{L(x): L\ \text{is affine},\ L\leq u\ \text{in}\ U, \text{and}\ DL\in \Omega^*\}.
 \end{equation}

By \eqref{push}, \eqref{pushu} and since  $\Omega, \Omega^*$ are bounded and convex, 
 $u, v$ are globally Lipschitz in $\R^n$ and satisfy 
\beq\label{Asolv} 
	C^{-1}(\chi_{_{\Om\setminus U}}+\chi_{_{\Omega^*}}) 
	      \leq \det\, D^2 v \leq C (\chi_{_{\Om\setminus U}}+\chi_{_{\Omega^*}})
  \eeq
  and 
  \beq\label{Asolv1} 
	C^{-1}\chi_{_{U}}
	      \leq \det\, D^2 \bar{u} \leq C\chi_{_{U}}
	        \eeq
in the sense of Alexandrov \cite{C92}, where $C$ is a positive constant depending only on $\lambda$.

For a convex function $w : \R^n\to(-\infty, \infty]$,
the associated \emph{Monge-Amp\`ere measure} $\mu_w$ is defined by
\beq\label{MAmeas}
\mu_w(E) := \left| \p w(E) \right| 
\eeq 
for any measurable set  $ E\subset\R^n$,
where $\p w$ is the sub-gradient of $w$ and $|\cdot|$ denotes the $n$-dimensional Hausdorff measure.
We say that $w$ satisfies $C_1 \chi_{_W}\leq \det\, D^2 w \leq C_2\chi_{_W}$ in the sense of Alexandrov,
if
$$ C_1|E\cap W|\leq \mu_w(E) \leq C_2|E\cap W| \ \ \ \forall \ E\subset\R^n. $$
Hence \eqref{Asolv} implies that the Monge-Amp\`ere measure 
$\mu_v$ is actually supported and bounded on $(\bom\setminus U)\cup \overline {\Omega^*}.$

\vskip5pt

\subsection{$C^{1,\alpha'}$ regularity of $\mathcal{F}$}
We recall the \emph{interior ball condition} proved in \cite{CM}, 
which will be useful in our subsequent analysis. 

\begin{lemma}[\cite{CM}]\label{intball}
Let $x\in U$ and $y=Du(x)$, then 
	\[ \Om\cap B_{|x-y|}(y) \subset U. \]
\end{lemma}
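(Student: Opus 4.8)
The plan is to exploit the fact that $y=Du(x)$ furnishes a supporting hyperplane to $u$ at $x$, and then to compare that hyperplane with the obstacle $\tfrac12|\cdot|^2$ on the inactive region. Since $x\in U$ and $U$ is open with $\det D^2u$ bounded between two positive constants there (cf.\ \eqref{Asolv1}), $u$ is differentiable at $x$, so $y=Du(x)$ is the unique element of $\partial u(x)$; hence the affine function $L(x'):=u(x)+\langle y,\,x'-x\rangle$ satisfies $L\le u$ on $\mathbb R^n$.

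Next I would test this inequality against points of the inactive region. After the normalization recalled in Subsection~\ref{s21}, $u\equiv\tfrac12|\cdot|^2$ on $\Omega\setminus\overline U$, hence by continuity $u(x')=\tfrac12|x'|^2$ for every $x'\in\Omega\setminus U$ (including the free boundary part $\partial U\cap\Omega$). For such $x'$ the bound $L(x')\le u(x')$ reads $u(x)+\langle y,\,x'-x\rangle\le\tfrac12|x'|^2$. Completing the square, $\tfrac12|x'|^2-\langle y,x'\rangle=\tfrac12|x'-y|^2-\tfrac12|y|^2$, and doing the same with $x$ in place of $x'$, this rearranges to
\[
\tfrac12|x'-y|^2 \ \ge\ \tfrac12|x-y|^2 + \Big(u(x)-\tfrac12|x|^2\Big).
\]
Since $x\in U$ forces $u(x)>\tfrac12|x|^2$, the parenthesized term is strictly positive, so $|x'-y|>|x-y|$.

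Finally, taking the contrapositive: if $x'\in\Omega$ and $|x'-y|\le|x-y|$, then $x'\in U$; in particular $\Omega\cap B_{|x-y|}(y)\subset U$, which is the assertion (the argument in fact yields the slightly stronger $\Omega\cap\overline{B_{|x-y|}(y)}\subset U$). I do not expect a genuine obstacle here — the whole content is the one-line completion-of-square estimate above; the only points requiring a little care are the justification that $y\in\partial u(x)$, using differentiability of $u$ on the open set $U$, and the fact that the normalization pins $u$ to $\tfrac12|\cdot|^2$ on the entire closed inactive region, free boundary included.
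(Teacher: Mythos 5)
Your proof is correct. The paper does not reproduce a proof of this lemma at all — it is simply quoted from \cite{CM} — and your supporting-hyperplane argument together with completing the square is the natural direct proof and, up to presentation, is the same computation that appears in \cite{CM}. One minor remark: the discussion of differentiability of $u$ at $x$ is superfluous, since the hypothesis $y=Du(x)$ already presupposes it, and in any case the argument only uses $y\in\partial u(x)$, which holds for any subgradient; the essential content is, as you say, the one-line inequality
\[
\tfrac12|x'-y|^2 \;\ge\; \tfrac12|x-y|^2 + \Bigl(u(x)-\tfrac12|x|^2\Bigr) \;>\; \tfrac12|x-y|^2
\]
valid for every $x'\in\Omega\setminus U$, whose contrapositive gives the (slightly stronger, closed-ball) inclusion.
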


It is shown in \cite{CM} that $u$ is $C^1$ smooth up to the free boundary $\mathcal F$, 
and the unit inner normal vector of $\mathcal F$ is given by
 \begin{equation}\label{normalformular}
 \nu(x)=\frac{Du(x)-x}{|Du(x)-x|}\quad \forall\ x\in \mathcal F.
 \end{equation}
Hence, the regularity of $u$ up to the free boundary $\mathcal F$ implies the regularity of the free boundary $\mathcal F$ itself.
The following regularity results have been obtained in \cite{CM}.

\begin{theorem}[\cite{CM}] \label{CMCL}
Assume that $\Omega, \Omega^*$ are disjoint and strictly convex,
 the densities $f, g$ satisfy
$\lambda^{-1}<f, g<\lambda$ for a positive constant $\lambda$. 
Then
\begin{itemize}

\item [$i)$]  $\bar u, v\in C^1(\mathbb{R}^n)$,  $Dv$  is 1-1 from $\overline{\Omega^*}$ to $\U$, and $Du$  is 1-1 from $\overline{U}$ to $\overline{\Omega^*}$. 
 
\item [$ii)$]  $u\in C^{1,\alpha'}$ up to the free boundary $\mathcal{F}$, and $\mathcal{F}$ is $C^{1,\alpha'}$ for some $\alpha\in(0,1)$.



\end{itemize}
\end{theorem}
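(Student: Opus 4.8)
Since the asserted regularity is local, it suffices to show that every point $x_0\in\overline U$ has a neighbourhood in which $u\in W^{2,p}$, and that $\mathcal F\in C^{1,1-\epsilon}$ near every point of $\overline{\mathcal F}\cap\partial\Omega$. In the interior of $U$ the equation reads $\det D^2u=f$ with $f\in C^0$ trapped between positive constants, so Caffarelli's interior $W^{2,p}$ theory gives $u\in W^{2,p}_{loc}(U)$, and the interior higher regularity of $\mathcal F$ is \cite{CLWf}. If $x_0\in\partial\Omega\setminus\overline{\mathcal F}$, then near $x_0$ both $\partial U=\partial\Omega$ and $\partial\Omega^*$ are $C^2$ uniformly convex and $Du$ is a continuous diffeomorphism by Theorem \ref{CMCL}, so the global regularity argument of \cite{CLW1} applies and yields $u\in C^{1,1-\epsilon}$ and then $W^{2,p}$ up to $\partial\Omega$ near $x_0$; if $x_0\in\mathcal F$ is an interior free boundary point the same conclusion follows from \cite{CLWf}. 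Everything therefore reduces to the behaviour near a point $x_0\in\overline{\mathcal F}\cap\partial\Omega$, with $y_0:=Du(x_0)\in\partial\Omega^*$.

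The first ingredient at such a point is a \emph{quantitative obliqueness estimate}. Theorem \ref{main2} asserts $\partial_{nt}\Omega=\emptyset$, i.e.\ the free boundary meets $\partial\Omega$ transversally at $x_0$, but only qualitatively; I would upgrade this to a uniform statement, namely that there is $\theta=\theta(n,\lambda,\Omega,\Omega^*)>0$ such that for $x\in\mathcal F$ near $x_0$ the free boundary normal $\nu(x)=\frac{Du(x)-x}{|Du(x)-x|}$ stays a definite angle $\theta$ away from being tangent to $\partial\Omega$. Equivalently, after an affine normalisation, the sections of $u$ centred near $x_0$ are comparable to a fixed wedge. The proof combines the interior ball condition of Lemma \ref{intball} (which traps $U$ from inside along $\partial\Omega^*$-radii, controlling the free boundary part of $\partial U$ from one side), the uniform convexity of $\Omega$ and $\Omega^*$, and a compactness/contradiction argument: if obliqueness degenerated along a sequence of points the rescaled limit would produce a genuine nontransverse intersection, contradicting Theorem \ref{main2}. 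The same information can be extracted from the dual potential $v$ near $y_0$, where $Dv$ maps the $C^2$ uniformly convex domain $\Omega^*$ onto $U$ and satisfies \eqref{Asolv}.

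With quantitative obliqueness available I would run a blow-up at $x_0$. Let $\ell_{x_0}$ be the supporting affine function of $u$ at $x_0$, take the sections $S_h=\{x\in\overline U:u(x)<\ell_{x_0}(x)+h\}$, and normalise them by affine maps $T_h$, chosen so that $T_hS_h$ has inradius and diameter comparable to $1$ (obliqueness is precisely what guarantees $|\det T_h|\sim h^{-n/2}$ and that $T_hS_h$ does not collapse near the non-convex portion of $\partial U$). Set $u_h(z):=h^{-1}\bigl(u(T_h^{-1}z)-\ell_{x_0}(T_h^{-1}z)\bigr)$. By \eqref{Asolv1}, stability of Alexandrov solutions, Lemma \ref{intball} and obliqueness, a subsequence satisfies $u_h\to u_\infty$, where $u_\infty$ is a convex Alexandrov solution of $\lambda_0\le\det D^2u_\infty\le\lambda_0^{-1}$ on a limiting wedge $W=\{x\cdot e_1>0\}\cap\{x\cdot e_2>0\}$ arising as the blow-up of $U$, carrying the free boundary condition $Du_\infty-x\parallel e_1$ on $\{x\cdot e_1=0\}\cap W$ and the natural (oblique, Neumann-type) boundary condition coming from $\partial\Omega$ on $\{x\cdot e_2=0\}\cap W$. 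The decisive step is then the \emph{classification of such global solutions}: one shows the limiting free boundary must be the hyperplane $\{x\cdot e_1=0\}$ and that $u_\infty$ is, up to the affine normalisation, a fixed quadratic polynomial, so $Du_h$ has oscillation $o(1)$ on $S_h$. Iterating this decay across dyadic scales $h\to0$ gives $Du\in C^{1-\epsilon}$ at $x_0$ for every $\epsilon>0$, hence $u\in C^{1,1-\epsilon}(\overline U)$ globally and, through \eqref{normalformular}, $\mathcal F\in C^{1,1-\epsilon}$ up to $\partial\Omega$. I expect this classification, together with the verification that the rescaled domains $T_hS_h$ remain nondegenerate near the non-convex free boundary piece, to be the main obstacle.

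Finally, once $u\in C^{1,1-\epsilon}(\overline U)$ is known, $Du:\overline U\to\overline{\Omega^*}$ is a homeomorphism with a quantitative modulus of strict convexity, all sections of $u$ — interior ones and those centred on $\partial U$ — are balanced at every scale, and $\det D^2u=f\in C^0(\overline U)$ with $f$ between positive constants. Caffarelli's $W^{2,p}$ estimate and its boundary counterpart, in the form adapted to the mixed fixed/free boundary along $\partial U$ as in \cite{CLW1, CLWf}, then give $u\in W^{2,p}(\overline U)$ for every $p\in(1,\infty)$, which completes the proof.
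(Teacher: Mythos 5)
You have proved the wrong statement. Theorem \ref{CMCL} is a \emph{cited} result of Caffarelli and McCann (see \cite{CM}); in the paper it appears in Section 2 as a preliminary, with no proof at all — the bracketed ``[\cite{CM}]'' in the theorem header is the entire justification. Its content is only: $\bar u, v\in C^1(\mathbb R^n)$, $Dv$ and $Du$ are bijections between $\overline{\Omega^*}$ and $\overline U$, and $u\in C^{1,\alpha'}$ up to $\mathcal F$ (whence $\mathcal F\in C^{1,\alpha'}$ via \eqref{normalformular}). What you have written instead is a road map for the paper's \emph{main} result, Theorem \ref{thmW2p} ($W^{2,p}$ regularity and $C^{1,1-\epsilon}$ of the free boundary up to $\partial\Omega$) — a far stronger and logically later assertion. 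A correct answer would simply defer to \cite{CM}: part (i) follows from the uniqueness of supporting planes established via Alexandrov estimates and the localization/interior-ball lemmas of \cite[\S6--7]{CM}, and part (ii) is \cite[Theorem 7.13]{CM} (the pointwise strict convexity estimate \eqref{strictconvex}) together with its dual, which upgrades $C^1$ to $C^{1,\alpha'}$ up to $\mathcal F$.

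Beyond proving the wrong theorem, your argument is also circular as written. You invoke ``$Du$ is a continuous diffeomorphism by Theorem \ref{CMCL}'' — the very statement under discussion — as an ingredient, and you further rely on Theorem \ref{main2} (absence of nontransverse points), Corollary \ref{co21}, Proposition \ref{obli} (obliqueness), and the $C^{1,1-\epsilon}$ machinery, all of which come \emph{after} Theorem \ref{CMCL} in the paper's logical chain and all of which use it. Additionally, your Theorem \ref{thmW2p}-style sketch requires hypotheses that Theorem \ref{CMCL} does not assume: you use $C^2$ uniform convexity of $\Omega,\Omega^*$ and continuity of $f,g$, whereas the cited theorem only assumes strict convexity and $\lambda^{-1}<f,g<\lambda$. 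So even setting aside the mismatch of conclusions, your argument would not be valid under the stated hypotheses.
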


\vskip5pt

\subsection{Sub-level sets} \label{s23}
To study higher order regularity of the potentials $u, v$,
we introduce the (centred)  sub-level sets as in  \cite{C92b,C96}.

\begin{definition}\label{defS}
Let $y_0\in \overline{\Omega^*}$ and $h>0$ be a small constant. We denote by
\beq\label{sect}
	S^c_{h}[v](y_0) := \left\{y\in\R^n \,:\, v(y)< v(y_0) + (y-y_0)\cdot \bar{p} + h\right\}
\eeq
the centred sub-level set of $v$ with height $h$, where $\bar{p}\in \R^n$ is chosen 
such that the centre of mass of $S^c_{h}[v](y_0)$ is $y_0$.
We denote by
\beq\label{sub}
S_h[v](y_0) : =\left\{y\in \Omega^* \,:\, v(y) < \ell_{y_0}(y) + h\right\}
\eeq
the sub-level set of $v$ with height $h$, where $\ell_{y_0}$ is a support function of $v$ at $y_0$. 
\end{definition}

Note that in the above definition, 
$S_h[v](y_0)$ is a subset of $\Omega^*$ but $S^c_{h}[v](y_0)$ may not be contained in $\Om^*$.
In the following we will write $S_h[v](y_0)$ and $S^c_{h}[v](y_0)$ as 
$S_h[v]$  and $S^c_{h}[v]$ when no confusion arises.

\begin{remark} \label{uniest11}
\emph{
Suppose $v(0)=0, v\geq 0.$
Let $L$ be the affine function such that $S^c_h[v](0)=\{v<L\}$.  
Since $(L-v)(0)=h$,  $L=v$ on $\partial S^c_h[v](0),$ $L\geq v\geq  0$ in $S^c_h[v](0),$ 
and $S^c_h[v](0)$ is balanced around $0$,  we have that 
 \begin{equation}\label{secrela}
v\leq L\leq Ch\ \ \ \text{in}\  S^c_h[v](0)
\end{equation}
 for a constant $C$ depending only on $n.$ 
The same property also holds if $v$ is replaced by $u.$ }
\end{remark}


For any $x_0\in \mathcal{F}$,  we have  $y_0:=Du(x_0)\in \partial\Om^*$.
When $h>0$ is sufficiently small, by \cite[Lemma 7.6 and Lemma 7.11]{CM} we have 
\begin{equation}\label{localise}
	 S_h^c[v] (y_0) \cap \overline{\Omega}=\emptyset. 
\end{equation}
By \cite[Theorem 7.13]{CM} we have furthermore the strict convexity 
 \begin{equation}\label{strictconvex}
 v(y)\geq v(y_0)+Dv(y_0)\cdot (y-y_0)+ C|y-y_0|^{1+\beta}\quad \forall\, y\in \overline{\Omega^*}\text{ near } y_0
 \end{equation}
for some constant  $\beta>1$, which in turn implies $u\in C^{1,\alpha'}$ as in part $ii)$ of Theorem \ref{CMCL}. 

\vskip5pt


In this paper,  
the notation $a\lesssim b$ (resp. $a\gtrsim b$) means that there exists a constant $C>0$ independent of $h$ and the potential functions $u$ and $v$,
such that $a\leq Cb$ (resp. $a\geq Cb$), and the notation $a\approx b$ means that $C^{-1}a\leq b\leq Ca$, where $a, b$ are both positive constants.
Given a convex domain $D\subset\R^n$, 
we say that $D$ has a good shape if the eccentricity of its minimum ellipsoid is uniformly bounded.


\section{A localisation lemma for $v$} \label{S7} 
 \begin{proof}[Proof of Theorem \ref{main2}]
 Suppose $\partial_{nt}\Omega$ is not empty, and let $x_0$ be a point in  $\partial_{nt}\Omega.$
 Denote $y_0=Du(x_0).$ Without loss of generality we may assume $x_0=0.$ Denote
 $\nu=\frac{Du(0)}{|Du(0)|}.$ Up to a rotation of coordinates we may assume $\nu=e_n$ is the $n$-th coordinate direction. By the definition of $\partial_{nt}\Omega$ we have that 
 \begin{equation}\label{np4}
 \Omega\subset \{x_n\leq 0\}.
 \end{equation}
 
 Let $G_{\epsilon}:=B_{\epsilon}(y_0)\cap\Omega^*,$ where
$\epsilon$ is a small positive number to be determined later. 
For any $p\in G_{\epsilon},$ let $q=Dv(p).$
By  the interior ball property, we have 
\begin{equation}\label{np5}
|p-q|\leq |p-0|,
\end{equation}
 since otherwise $0\in B_{|p-q|}(p)\cap \Omega\subset U$ contradicts to
the assumption that $0\in \partial\Omega\cap\overline{\Omega\cap\partial U}.$  
By \eqref{np4} and \eqref{np5} we have that 
\begin{eqnarray*}
|q_n|&\leq& |p-0|-p_n\\
&=& p_n\left(1+\frac{\sum_{i=1}^{n-1}p_i^2}{p^2_n}\right)^{1/2}-p_n\\
&=&\frac{\sum_{i=1}^{n-1}p_i^2}{2p_n}+o(\frac{\sum_{i=1}^{n-1}p_i^2}{2p_n}).
\end{eqnarray*}
Then, it follows from the definition of $G_{\epsilon}$ and the above estimate that 
\begin{equation}\label{np6}
|q_n| \leq C\epsilon^2.
\end{equation}

By \eqref{np5} again, we have that
$$\sum_{i=1}^n|p_i-q_i|^2 \leq \sum_{i=1}^np_i^2.$$
This implies 
$$\sum_{i=1}^nq_i^2\le\sum_{i=1}^n2p_iq_i\leq \sum_{i=1}^{n-1}2p_iq_i,$$
where we have used the fact that $p_n>0, q_n<0.$
Hence, by Cauchy-Schwarz inequality we have 
$$\sum_{i=1}^{n-1}q_i^2\leq 2(\sum_{i=1}^{n-1}p_i^2)^{\frac{1}{2}}(\sum_{i=1}^{n-1}q_i^2)^{\frac{1}{2}},$$ which implies $\sum_{i=1}^{n-1}q_i^2\leq 4\sum_{i=1}^{n-1}p_i^2\leq 4\epsilon^2.$ 
Hence 
\begin{equation}\label{np7}
|q_i|\leq 2\epsilon\ \ \text{ for}\ i=1, \cdots, n-1.
\end{equation}

By \eqref{np6} and \eqref{np7} we have that 
\begin{equation}\label{np8}
|Dv(G_{\epsilon})| \leq C\epsilon^{n+1}.
\end{equation}
On the other hand, since $\Omega^*$ is convex,  by the definition of $G_{\epsilon},$
 we have that $|G_{\epsilon}| \gtrsim  \epsilon^n.$
Hence $|G_{\epsilon}| \gg |Dv(G_{\epsilon})|$ as $\epsilon$ is sufficiently small, which contradicts to the fact that $Dv$ is the optimal transport map between $U$ and $\Omega^*$ with densities bounded between $1/\lambda$ and $\lambda$ for some positive constant $\lambda.$
Therefore, $\partial_{nt}\Omega$ must be an empty set. Finally, by a standard covering argument we have that $U$ is globally Lipschitz.
\end{proof}

Combining  Theorem \ref{main2} and \cite{CM} we have the following useful localisation lemma for $v.$
\begin{lemma}\label{locallemma1}
There exists $h_0>0$ small such that
for any $y \in \overline{\Omega^*}$ we have $S_h^c[v](y)\cap \overline\Omega=\emptyset,$ 
provided $h\leq h_0.$
\end{lemma}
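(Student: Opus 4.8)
The plan is to upgrade the pointwise, $y$-dependent localisation statement \eqref{localise} --- which \cite{CM} establishes only for each fixed $y_0\in\mathcal F$ with a height $h=h(y_0)$ possibly degenerating as $y_0$ varies --- into a \emph{uniform} statement valid for all $y\in\overline{\Om^*}$ with a single $h_0$. The mechanism that makes this possible is precisely Theorem \ref{main2}: once $\partial_{nt}\Om=\emptyset$, the free boundary meets the fixed boundary transversally everywhere, so there is no cusping of $U$ at $\partial\Om$, and the geometric obstruction that could force $h(y_0)\to0$ disappears. Concretely, I would argue by compactness and contradiction: suppose no such $h_0$ exists, so there are sequences $y_k\in\overline{\Om^*}$ and $h_k\downarrow0$ with $S^c_{h_k}[v](y_k)\cap\overline\Om\neq\emptyset$. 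By compactness of $\overline{\Om^*}$ we may assume $y_k\to y_\infty\in\overline{\Om^*}$.

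The key steps are then: (i) For $y_\infty$ in the interior of $\Om^*$, or more generally whenever $Dv(y_\infty)\in U$ (which includes all $y_\infty$ with $y_\infty\notin Dv^{-1}(\mathcal F\cup\partial\Om)$), the sub-level sets $S^c_h[v](y_\infty)$ shrink to $\{y_\infty\}$ as $h\to0$ and stay well inside $\Om^*$ by \eqref{Asolv} and the strict convexity of $v$ there; since $\overline\Om\cap\overline{\Om^*}=\emptyset$, this already gives $S^c_h[v](y_\infty)\cap\overline\Om=\emptyset$ for small $h$, and the same then holds for $y_k$ close to $y_\infty$ by a routine stability/inclusion argument for centred sections (using that $v$ is Lipschitz and the Monge--Amp\`ere measure is pinched, so nearby centred sections of comparable height are comparable in size). (ii) The remaining and essential case is $y_\infty\in\partial\Om^*$ with $x_\infty:=Dv(y_\infty)\in\overline{\mathcal F}$. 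If $x_\infty\in\mathcal F\setminus\partial\Om$, then \eqref{localise} applies at $y_\infty$ directly and one transfers it to the $y_k$ by the same stability argument as in (i). If $x_\infty\in\overline{\mathcal F}\cap\partial\Om$ --- the genuinely delicate configuration --- one invokes Theorem \ref{main2}: since $x_\infty\notin\partial_{nt}\Om$, the intersection is transverse, hence there is a cone of directions $z$ with $\langle Du(x_\infty)-x_\infty,z\rangle>0$ for $z$ pointing into $\Om$, and correspondingly $U$ satisfies a uniform exterior-cone (Lipschitz) condition near $x_\infty$. Then one runs the same estimate as in the proof of Theorem \ref{main2}: points $q=Dv(p)$ with $p$ near $y_\infty$ are pushed, via the interior-ball property (Lemma \ref{intball}), into a thin slab near $\partial\Om$, giving $|Dv(G_\eps)|\ll|G_\eps|$ and contradicting \eqref{Asolv}; this forces $S^c_{h}[v](y)$ near $y_\infty$ to avoid $\overline\Om$ for all small $h$, uniformly in $y$.

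I expect the main obstacle to be step (ii) in the boundary case $x_\infty\in\overline{\mathcal F}\cap\partial\Om$: one must show that the \emph{centred} section $S^c_h[v](y)$, whose centring point $\bar p$ is an implicitly defined subgradient and which need not lie inside $\Om^*$, cannot poke into $\overline\Om$. The subtlety is that Lemma \ref{intball} controls the \emph{image} $Dv$ of points in $\Om^*$, whereas the centred section is defined by an affine lower bound that may be attained outside $\Om^*$; so one first needs to relate $S^c_h[v](y)$ to the ordinary section $S_h[v](y)\subset\Om^*$ (they are comparable for small $h$ by \eqref{secrela}, balancedness, and the pinching \eqref{Asolv}, possibly after slightly adjusting heights $h\to Ch$), and only then apply the transversality-driven slab estimate. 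Handling the uniformity in $y$ across the transition from interior to boundary points, and making the ``comparable section'' constants independent of $y$, is where the care is needed; Theorem \ref{main2} is exactly what makes all these constants uniform. Once this is in place, the statement follows by setting $h_0$ to the infimum of the finitely many scales produced by a covering of $\overline{\Om^*}$, which is positive.
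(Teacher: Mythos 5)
Your overall reading of the lemma --- that it is the $y$-uniform upgrade of \eqref{localise}, and that the uniformity is exactly what Theorem \ref{main2} buys by killing the non-transverse configuration --- matches what the paper intends; indeed the paper gives no argument at all beyond ``combining Theorem \ref{main2} and \cite{CM}.'' Your case split and your warning about centred versus ordinary sections are also the right things to worry about.

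However, the mechanism you propose for the genuinely hard case $x_\infty\in\overline{\mathcal F}\cap\partial\Om$ does not work. You say to ``run the same estimate as in the proof of Theorem \ref{main2}'' and obtain $|Dv(G_\eps)|\ll|G_\eps|$, but that slab estimate is powered by \eqref{np4}, i.e.\ $\Omega\subset\{x_n\le 0\}$ with $e_n$ the transport direction --- which is precisely the \emph{non}-transversality that Theorem \ref{main2} rules out. At a transverse point the interior ball $B_{|x_\infty-y_\infty|}(y_\infty)\cap\Omega\subset U$ is full-dimensional and the computation after \eqref{np5} no longer forces $q_n=O(\eps^2)$; $Dv(G_\eps)$ is \emph{not} confined to a thin slab, so there is no measure contradiction, and no contradiction is wanted since these points do exist. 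Moreover, even where the slab estimate applies, it controls the image $Dv(G_\eps)$ and says nothing directly about the centred section $S_h^c[v](y)$, which is a set in $y$-space defined through an implicitly tilted affine function; the step ``this forces $S^c_h[v](y)$ to avoid $\overline\Om$'' is not argued. There is also a smaller bookkeeping gap: the case $x_\infty\in\partial\Omega\setminus\overline{\mathcal F}$ (the fixed-boundary part of $\partial U$ away from the free boundary) falls between your (i) and your two subcases of (ii) and is never addressed. A more promising route to the section bound is to use the explicit structure $v=\frac12|y|^2$ on $\Omega\setminus\overline U$: with $x_0=Dv(y_0)\in\partial U$ and $u(x_0)=\frac12|x_0|^2$, one has $v(p)-\ell_{y_0}(p)=\frac12|p-x_0|^2$ for $p\in\Omega\setminus\overline U$, which combined with Remark \ref{uniest11} (so $S_h^c[v]\subset S_{Ch}[v]$ as full sublevel sets) and the balancedness of $S_h^c[v]$ around $y_0\in\overline{\Omega^*}$, at distance $\ge\operatorname{dist}(\overline\Omega,\overline{\Omega^*})>0$ from $\overline\Omega$, is the kind of quantitative input that can actually close the argument.
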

Note that Lemma \ref{locallemma1} is a strengthened version of \eqref{localise}.
Then by Theorem \ref{main2} and \cite{C96,CLWf} we have the following important properties of $v.$
\begin{lemma}[Uniform density] \label{ud1} 
Let $\Omega, \Omega^*, f, g$ be as in Theorem \ref{main2}. Assume that $\Omega^*$ is $C^2$ and uniformly convex. 
For any $y_0\in \partial \Omega^*,$
we have 
\beq\label{UniDen}
{\small{\text{$\frac{|S_h^c[v](y_0) \cap \Omega^*|}{|S_h^c[v](y_0) |}$}}}  \geq \delta ,
\eeq
provided $0<h\leq h_0,$
where $\delta$ is a positive constant depending on $n, \lambda,\Omega^*,$
but independent of $h$.  
 \end{lemma}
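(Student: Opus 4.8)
\textbf{Proof proposal for Lemma \ref{ud1}.}

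The plan is to reduce the statement to a known interior-type uniform density estimate by exploiting the localisation provided by Lemma \ref{locallemma1}. The key point is that, unlike the original estimate \eqref{localise} from \cite{CM}, Lemma \ref{locallemma1} holds for \emph{every} $y\in\overline{\Omega^*}$ with a \emph{uniform} height threshold $h_0$, so the sub-level set $S_h^c[v](y_0)$ never ``sees'' the component $\Om\setminus U$ of the support of $\mu_v$. Concretely, by Lemma \ref{locallemma1} we have $S_h^c[v](y_0)\cap\overline\Om=\emptyset$ for $h\le h_0$, and in particular $S_h^c[v](y_0)\cap(\bom\setminus U)=\emptyset$. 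Recalling from \eqref{Asolv} that $\mu_v$ is supported and bounded above and below on $(\bom\setminus U)\cup\overline{\Om^*}$, it follows that on the set $S_h^c[v](y_0)$ the Monge--Amp\`ere measure $\mu_v$ satisfies $C^{-1}\chi_{\Om^*}\le\det D^2 v\le C\chi_{\Om^*}$; that is, $v$ restricted to a neighbourhood of $y_0$ behaves exactly like a solution of a Monge--Amp\`ere equation with bounded right-hand side on $\Om^*$ alone, with $\Om$ playing no role whatsoever.

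Next I would invoke the boundary regularity theory of \cite{C96} (and its refinement in \cite{CLWf}) for solutions of $C^{-1}\chi_{\Om^*}\le\det D^2 v\le C\chi_{\Om^*}$ on a $C^2$ uniformly convex domain $\Om^*$: this is precisely the setting in which the centred sub-level sets $S_h^c[v](y_0)$ at a boundary point $y_0\in\partial\Om^*$ are comparable, after an affine normalisation, to half-ellipsoids of good shape, and in particular satisfy the uniform density bound $|S_h^c[v](y_0)\cap\Om^*|\ge\delta\,|S_h^c[v](y_0)|$. The constant $\delta$ in Caffarelli's theory depends only on the dimension, the ellipticity bound $\lambda$, and the $C^2$-uniform-convexity modulus of $\Om^*$ — exactly the dependence claimed — and crucially not on $h$, since the estimate is scale-invariant under the Monge--Amp\`ere affine renormalisations. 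One must check that the localisation is robust enough: the argument of \cite{C96} for a boundary point uses the behaviour of $v$ only in an arbitrarily small neighbourhood of $y_0$, and by Lemma \ref{locallemma1} (taking $h_0$ possibly smaller, depending on the distance $\dist(\Om,\Om^*)$ and the diameters) every $S_h^c[v](y_0)$ with $h\le h_0$ lies in such a neighbourhood and is disjoint from $\overline\Om$; so the hypotheses of the boundary theory are met uniformly in $y_0\in\partial\Om^*$ and in $h\le h_0$.

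The main obstacle I anticipate is not the density estimate itself but verifying that all the constants are genuinely uniform in the boundary point $y_0$ and the height $h$ — in particular that the affine maps normalising $S_h^c[v](y_0)$ do not degenerate as $y_0$ ranges over $\partial\Om^*$ or as $h\to 0$. This is where the $C^2$ uniform convexity of $\Om^*$ is essential: it gives a uniform interior and exterior ball condition at every $y_0\in\partial\Om^*$, which in Caffarelli's boundary argument yields the ``good shape'' of the normalised sub-level sets with constants depending only on the two ball radii. A secondary technical point is to ensure that $h_0$ can be chosen \emph{independent of $y_0$} in Lemma \ref{locallemma1}; this follows from the compactness of $\partial\Om^*$ together with the strict convexity and disjointness of the closures $\overline\Om,\overline{\Om^*}$, so that the ``escape'' of $S_h^c[v](y_0)$ from $\overline\Om$ happens at a rate controlled uniformly along the boundary. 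Once these uniformity checks are in place, \eqref{UniDen} is immediate from the cited boundary regularity results.
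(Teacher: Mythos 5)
Your proposal matches the paper's own (very brief) justification: the paper states Lemma \ref{ud1} as a direct consequence of Theorem \ref{main2} via Lemma \ref{locallemma1} together with the boundary regularity machinery of \cite{C96,CLWf}, which is exactly the reduction you carry out — localise to kill the $\chi_{\bom\setminus U}$ part of $\mu_v$, then apply Caffarelli's uniform-density estimate for $\det D^2 v \approx \chi_{\Omega^*}$ on the $C^2$ uniformly convex $\Omega^*$. Your extra remarks on uniformity in $y_0$ and $h$ are sound (and the $y_0$-independence of $h_0$ is already built into the statement of Lemma \ref{locallemma1}), so the proposal is correct and follows the same route as the paper.
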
 

\begin{corollary}\label{co21}
Under the conditions in Lemma \ref{ud1}, we have
\begin{itemize}
\item[$(i)$] \emph{Volume estimate:}
	\begin{equation}\label{nm1}
 		|S_h[v](y_0)|\approx |S_h^c[v](y_0)\cap \Omega^*|\approx |S_h^c[v](y_0)|\approx h^{\frac{n}{2}}.
	\end{equation} 
Moreover, for any given affine transform $\mathcal{A}$, 
if one of $\mathcal{A}(S_h^c[v](y_0))$ and $\mathcal{A}(S_h[v](y_0))$ has a good shape, so is the other one.

\item[$(ii)$] \emph{Tangential $C^{1,1-\epsilon}$ regularity for $v$:} Assume in addition that $f\in C(\overline{\Omega}),\ g\in C(\overline{\Omega^*})$. Let $\mathcal{H}$ be the tangent hyperplane of $\partial\Omega^*$ at $y_0$.   
  Then $\forall\,\epsilon>0$,  $\exists \,C_\epsilon$ such that 
 \begin{equation}\label{tanc2}
 B_{C_\epsilon h^{\frac{1}{2}+\epsilon}}(y_0) \cap \mathcal{H} \subset S_h^c[v](y_0) \ \ \ \text{for $h>0$ small.} 
 \end{equation}
\end{itemize}
\end{corollary}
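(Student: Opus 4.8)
\emph{Overview.} The plan is to reduce both assertions to the affine–invariant theory of Monge–Amp\`ere sections, feeding in three facts already established: the localisation of sections (Lemma~\ref{locallemma1}), the two–sided bound \eqref{Asolv} on the Monge–Amp\`ere measure of $v$, and the uniform density estimate (Lemma~\ref{ud1}).

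\emph{Part $(i)$.} Write $S_h^c=S_h^c[v](y_0)$. First I would record that, by Lemma~\ref{locallemma1}, for $h\le h_0$ we have $S_h^c\cap\overline\Omega=\emptyset$, so \eqref{Asolv} gives $\mu_v(E)\approx|E\cap\Omega^*|$ for every Borel $E\subset S_h^c$; in particular $\mu_v(S_h^c)\approx|S_h^c\cap\Omega^*|$, and by Lemma~\ref{ud1} the latter is $\approx|S_h^c|$. Thus on $S_h^c$ the potential $v$ behaves as a solution of a non‑degenerate Monge–Amp\`ere equation, which lets me invoke the standard properties of centred sections: John's lemma yields an ellipsoid $\mathcal E$ concentric with $S_h^c$ at $y_0$ with $\mathcal E\subset S_h^c\subset C_n\mathcal E$, and the balanced‑section estimates (Alexandrov's maximum principle together with the doubling property; see \cite{C92b,C96}) give $\mu_v(S_h^c)\,|S_h^c|\approx h^n$. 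Combining these yields $|S_h^c|^2\approx h^n$, i.e. $|S_h^c|\approx h^{n/2}$, and hence also $|S_h^c\cap\Omega^*|\approx h^{n/2}$. For $S_h[v](y_0)$, which is the support‑plane section at the same point, I would use Remark~\ref{uniest11} together with the fact that $\mu_v$ is bounded above and below on $\Omega^*\cap S_h^c$ to conclude that $S_h[v](y_0)$ and $S_h^c[v](y_0)$ each contain a fixed dilate of the other — about $y_0$, with dilation factor depending only on $n$ and $\lambda$ — so that $|S_h[v](y_0)|\approx h^{n/2}$ as well. This last comparison also gives the ``moreover'' assertion: a fixed affine map $\mathcal A$ sends two convex bodies, each a bounded dilate of the other about a common point, to two such bodies, and whether the minimal ellipsoid of a convex body has bounded eccentricity is unaffected by passing between two such bodies.

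\emph{Part $(ii)$.} This is the tangential $C^{1,1-\epsilon}$ estimate, which I would obtain by running the iteration of \cite{CLWf} (see also \cite{CLW1}); that iteration is now available in the present situation precisely because of $(i)$: the sections $S_h^c[v](y_0)$ have volume $\approx h^{n/2}$, they satisfy the uniform density bound, they localise away from $\Omega$, and $\partial\Omega^*$ is $C^2$ and uniformly convex at $y_0$. Passing to normalised coordinates at dyadic heights $h_k=2^{-k}$, and using that $\det D^2 v$ is bounded and bounded below on a definite fraction of each normalised section, one shows that the tangential width of the section contracts from one scale to the next by at most a factor governed by the second fundamental form of $\partial\Omega^*$ at $y_0$; iterating down to height $h$ — over only $\log_2(1/h)$ scales — accumulates a loss of at most $h^{-\epsilon}$ for each prescribed $\epsilon>0$. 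This is exactly the inclusion $B_{C_\epsilon h^{1/2+\epsilon}}(y_0)\cap\mathcal H\subset S_h^c[v](y_0)$; equivalently, with $\bar p$ as in \eqref{sect}, $v(y)-v(y_0)-(y-y_0)\cdot\bar p\lesssim_\epsilon|y-y_0|^{2-\epsilon}$ for $y\in\mathcal H$ near $y_0$.

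\emph{Main obstacle.} The difficulty is entirely in $(ii)$. By uniform convexity the tangent hyperplane $\mathcal H$ meets $\overline{\Omega^*}$ only at $y_0$, so along $\mathcal H\setminus\{y_0\}$ one is forced to estimate $v$ where $\det D^2 v=0$, i.e. outside the region carrying the Monge–Amp\`ere mass; the bound therefore has to be propagated from the interior of $\Omega^*$, where the equation is non‑degenerate, out through $\partial\Omega^*$. The reason one cannot reach $C^{1,1}$, and instead loses the factor $h^{-\epsilon}$, is that $\partial\Omega^*$ is merely $C^2$, and controlling this loss is exactly what the iteration of \cite{CLWf} is built to do; we adapt it here. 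Part $(i)$, by contrast, is routine once Lemmas~\ref{locallemma1} and~\ref{ud1} are in place.
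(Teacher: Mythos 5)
Your proposal follows essentially the same route as the paper's proof: establish a two‑sided comparison between the support‑plane section $S_h[v]$ and the centred section $S_h^c[v]$, combine it with the uniform density (Lemma~\ref{ud1}) and the standard balanced‑section/Alexandrov estimates to get $|S_h^c[v]|\approx h^{n/2}$ and the ``good shape'' equivalence, and then note that the localisation (Lemma~\ref{locallemma1}) reduces $(ii)$ to the tangential iteration of \cite{C96,CLWf}. The paper does exactly this, citing \cite[Lemma 2.2]{CLW1} for the section comparison, \cite[Corollary 3.1]{C96} for the volume estimate, and \cite[Lemma 4.1]{C96} for the tangential $C^{1,1-\epsilon}$ bound.

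The one place where your justification is thinner than the paper's is the comparison $S_h[v]\subset S_{Ch}^c[v]\cap\Omega^*$. Remark~\ref{uniest11} gives the easy inclusion $S_h^c[v]\cap\Omega^*\subset S_{Ch}[v]$ at once (since the supporting affine function is the zero function after normalisation), but the reverse inclusion requires controlling the slope $\bar p$ of the centring hyperplane against the diameter of $S_h[v]$, which is not a consequence of bounded $\mu_v$ alone — it uses the strict convexity estimate \eqref{strictconvex}. The paper points to exactly this ingredient (``by the strict convexity estimate of $v$ in $\overline{\Omega^*}$\dots and the fact that $S_h^c[v]$ is balanced around $0$'') before invoking \cite[Lemma 2.2]{CLW1}. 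You should make the reliance on \eqref{strictconvex} explicit; once that is in place, the rest of your argument for both parts is sound and matches the paper.
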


\begin{proof}  
Assume $y_0=0$
and write $S_h^c[v](0), S_h[v](0)$ as $S_h^c[v], S_h[v]$ for brevity. 
By the strict convexity estimate of $v$  in $\overline{\Omega^*}$ (see \eqref{strictconvex}) and the fact that $S^c_h[v]$ is balanced around $0$,
we have an equivalence relation between $S_h[v]$ and $S^c_{h}[v]$: 
\beq\label{equi0} 
S^c_{b^{-1}h}[v] \cap \Om^* \subset S_h[v] \subset S^c_{bh}[v] \cap \Om^*\ \ \ \forall\, h>0 \text{ small},
\eeq
where $b\geq1$ is a constant independent of $h$. 
For a proof of \eqref{equi0}, we refer the reader to  \cite[Lemma 2.2]{CLW1}.

From Lemma \ref{ud1} and \eqref{equi0}, the volume estimate \eqref{nm1} 
can be deduced similarly as in \cite[Corollary 3.1]{C96}. 
Note that by \eqref{localise} we have that $\det\, D^2v=\tilde{f}(y)\chi_{S^c_h[v]\cap\Omega^*}$
in $S_h^c[v],$ where $\tilde{f}(y)=\frac{g(y)}{f(Dv(y))}\in C(S^c_h[v]\cap \overline{\Omega^*}).$ 
Then, the proof of tangential $C^{1,1-\epsilon}$ estimate  is the same as in \cite[Lemma 4.1]{C96}.
\end{proof}

\section{Obliqueness at intersection points}
In this section we prove the obliqueness estimate at the intersection points of the free boundary $\overline{\mathcal{F}}$ and the fixed boundary $\partial\Om$. Given $y_0\in \partial\Omega^*,$ let $x_0:=Dv(y_0).$ If $x_0\in \partial \Omega\setminus \overline{\mathcal{F}}$ (resp. $x_0\in\mathcal{F}$) the obliqueness estimate has been established in \cite{CLW1} (resp. \cite{CLWf}). The situation becomes more complicated when $x_0\in \partial\Omega\cap \overline{\mathcal{F}}.$ 
Denote by $\nu_{\Omega}(x_0), \nu_{\Omega^*}(y_0) $ the inner unit normal of $\partial \Omega, \partial \Omega^*$ at $x_0, y_0$ respectively,  and denote by
$\nu_{\overline{\mathcal{F}}}(x_0):=\frac{Du(x_0)-x_0}{|Du(x_0)-x_0|}$ the unit normal of $\overline{\mathcal{F}}$ at $x_0$ in the direction of transportation. We have the following key estimate of this work.
\begin{proposition}\label{obli}
Suppose the hypotheses of Theorem \ref{thmW2p}.  Let $y_0\in \partial\Omega^*,$ and $x_0:=Dv(x_0)\in \partial\Omega\cap \overline{\mathcal{F}}.$ 
Then $\nu_{\Omega^*}(y_0) \cdot \nu_{\overline{\mathcal{F}}}(x_0)>0$ and $\nu_{\Omega^*}(y_0) \cdot \nu_{\Omega}(x_0)>0.$
\end{proposition}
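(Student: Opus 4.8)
The plan is to argue by contradiction, assuming that one of the two inner products is non-positive, and to use the localisation lemma (Lemma~\ref{locallemma1}) together with the uniform density estimate (Lemma~\ref{ud1}) for $v$ to derive a volume contradiction against the fact that $Dv$ is a bounded-density transport map between $\Omega^*$ and $U$. Concretely, suppose $\nu_{\Omega^*}(y_0)\cdot\nu_{\overline{\mathcal F}}(x_0)\le 0$. Without loss of generality take $x_0=0$, rotate so that $\nu_{\overline{\mathcal F}}(0)=Du(0)/|Du(0)|=e_n$, whence $\Omega\subset\{x_n\le 0\}$ locally (this uses that $0\in\overline{\mathcal F}\cap\partial\Omega$, so $0$ is, up to the limit, a free boundary point and Lemma~\ref{intball} gives the interior-ball/supporting half-space structure near $0$). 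The assumption $\nu_{\Omega^*}(y_0)\cdot e_n\le 0$ says that the outer normal of $\Omega^*$ at $y_0$ has a nonnegative $e_n$-component, i.e. $\Omega^*$ lies essentially on the side $\{y_n\ge y_0\cdot e_n\}$ near $y_0$ up to first order. The strategy is then to probe a small cap $S_h^c[v](y_0)$: by Lemma~\ref{locallemma1} it avoids $\overline\Omega$, by Lemma~\ref{ud1} it has $|S_h^c[v](y_0)\cap\Omega^*|\gtrsim h^{n/2}$, and by Corollary~\ref{co21}(ii) it contains a tangential disc $B_{C_\epsilon h^{1/2+\epsilon}}(y_0)\cap\mathcal H$ where $\mathcal H$ is the tangent plane to $\partial\Omega^*$ at $y_0$.

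The heart of the argument is to estimate $|Dv(S_h^c[v](y_0)\cap\Omega^*)|$ from above and show it is $\ll h^{n/2}$, contradicting the density bound $|Dv(E)|\approx|E\cap\Omega^*|$. For each $y$ in this cap, with $x=Dv(y)\in\overline U$, the interior-ball property (Lemma~\ref{intball}, applied at the point $x$ with $y=Du(x)$) gives $\Omega\cap B_{|x-y|}(y)\subset U$; combined with $0\in\overline{\Omega\cap\partial U}$ and $\Omega\subset\{x_n\le 0\}$, this forces $|x-y|\le |y|$, exactly as in the proof of Theorem~\ref{main2} (inequalities \eqref{np5}--\eqref{np7}). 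Running that computation with $y$ ranging over the tangential disc of radius $r\approx h^{1/2+\epsilon}$ around $y_0$, and using that the tangent plane $\mathcal H$ makes a nonnegative angle with $e_n$ (from the assumption on $\nu_{\Omega^*}(y_0)$), I expect to get that $x=Dv(y)$ is confined to a region of the form $\{|x_n|\lesssim r^2\}\times\{|x'|\lesssim r\}$, hence $|Dv(\text{disc})|\lesssim r^{n+1}\approx h^{(n+1)(1/2+\epsilon)}$. Choosing $\epsilon$ small, $(n+1)(1/2+\epsilon)>n/2$, so this is $\ll h^{n/2}\lesssim|S_h^c[v](y_0)\cap\Omega^*|\lesssim|Dv(S_h^c[v](y_0)\cap\Omega^*)|$ — a contradiction once $h$ is small enough. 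The case $\nu_{\Omega^*}(y_0)\cdot\nu_{\Omega}(x_0)\le 0$ should follow either by the symmetric argument with the roles of the normals exchanged (using the $C^2$ uniform convexity of $\Omega$ to control $\partial\Omega$ near $x_0$ from the other side), or by first establishing $\nu_{\Omega^*}(y_0)\cdot\nu_{\overline{\mathcal F}}(x_0)>0$ and then comparing $\nu_{\overline{\mathcal F}}(x_0)$ with $\nu_\Omega(x_0)$ via the defining inequality $\langle Du(x_0)-x_0,z\rangle>0$ for $z$ pointing into $\Omega$, which must hold now that $\partial_{nt}\Omega=\emptyset$ by Theorem~\ref{main2}.

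The main obstacle I anticipate is making the geometric bookkeeping near $x_0$ rigorous: at $x_0\in\overline{\mathcal F}\cap\partial\Omega$ the active region $U$ is merely Lipschitz and its boundary is a non-smooth join of the convex fixed-boundary piece and the non-convex free-boundary piece, so ``$\nu_{\overline{\mathcal F}}(x_0)$'' is defined only as the transport direction $Du(x_0)-x_0$ and the supporting-half-space statement $\Omega\subset\{x_n\le 0\}$ near $0$ needs care — one wants to approximate $x_0$ by genuine free-boundary points $x_k\in\mathcal F$, apply Lemma~\ref{intball} and \eqref{normalformular} there, and pass to the limit. A secondary subtlety is that $S_h^c[v](y_0)$ is a centred cap and need not lie in $\Omega^*$, so one must work with the genuine sub-level set $S_h[v](y_0)\subset\Omega^*$ and the equivalence \eqref{equi0}, and keep track of where the tangential disc from Corollary~\ref{co21}(ii) actually sits relative to $\Omega^*$ when the angle condition degenerates. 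Once these two points are handled, the volume count is routine and mirrors Section~\ref{S7} closely.
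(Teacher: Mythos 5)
Your plan has a genuine gap at the very first step, and it is not a matter of extra care — it is simply an impossible configuration. After rotating so that $\nu_{\overline{\mathcal F}}(0)=Du(0)/|Du(0)|=e_n$, you assert ``$\Omega\subset\{x_n\le 0\}$ locally''. But this inclusion is precisely the defining condition of the nontransverse set $\partial_{nt}\Omega$ in \eqref{ntpoint}: it says $\langle Du(x_0)-x_0,z\rangle\le 0$ for all $z\in\Omega$, i.e.\ $x_0\in\partial_{nt}\Omega$. Theorem~\ref{main2} has already shown $\partial_{nt}\Omega=\emptyset$, so no such half-space inclusion is available at any point of $\overline{\mathcal F}\cap\partial\Omega$. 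The interior ball property (Lemma~\ref{intball}) only gives $B_{|y_0|}(y_0)\cap\Omega\subset U$; it says nothing about $\Omega$ lying on one side of the tangent plane $\{x_n=0\}$ of that ball. Your suggested fix (approximate $x_0$ by free-boundary points and pass to the limit) recovers the interior ball at $x_0$ but again does not yield $\Omega\subset\{x_n\le 0\}$.

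This destroys the volume count. In the proof of Theorem~\ref{main2}, the inclusion $\Omega\subset\{x_n\le 0\}$ is what forces $q_n=Dv(p)\cdot e_n\le 0$; that sign, inserted into $|p-q|\le|p|$, is exactly what upgrades the crude bound $|q_n|\lesssim|p'|$ to the quadratic bound $|q_n|\lesssim|p'|^2$ in \eqref{np6}, which is the whole source of the gain $\epsilon^{n+1}$ versus $\epsilon^n$. Without a sign on $q_n$, the inequality $|q|^2\le 2p\cdot q=2|y_0|q_n+2\delta\cdot q$ (with $p=y_0+\delta$, $|\delta|\lesssim h^{1/2-\epsilon}$) gives no smallness whatsoever on $q$: taking $q_n$ of order one makes the inequality trivially satisfiable, so $Dv(S_h^c[v](y_0)\cap\Omega^*)$ is only known to lie in a fixed bounded set. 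There is no contradiction. The second part of the proposition, $\nu_{\Omega^*}(y_0)\cdot\nu_{\Omega}(x_0)>0$, is likewise not addressed: the relation $\nu_{\overline{\mathcal F}}(x_0)\cdot\nu_\Omega(x_0)>-1$ from Theorem~\ref{main2} does not convert positivity of one inner product into positivity of the other.

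For comparison, the paper does not try to transplant the nontransversality argument. By monotonicity both inner products are already known to be nonnegative, so one only has to exclude the two equalities, and each is handled by a multi-stage blow-up. In Case~(i) one builds an $(n-2)$-dimensional submanifold $\mathcal M\subset\overline U$ through $x_0$ (Lemma~\ref{localgood}), establishes the anisotropic scaling $s\approx h^{1/3}$ of the section in the normal direction (Lemma~\ref{estfors}), normalizes $S_h^c[v]$ by an affine map $T_h$ of the type \eqref{T1}--\eqref{ag2}, and shows that the blow-up limit $v_0$ solves $\det D^2 v_0=c_0\chi_{\Omega_0^*}$ with a split domain, contradicting its strict convexity; the argument subdivides according to whether $e_1\in H'=H_{\mathcal F}(0)\cap H_\Omega(0)$. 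Case~(ii) is handled by a parallel two-stage blow-up following \cite{CL1} and \cite{CLW1}. This is a fundamentally different mechanism from the simple pushforward volume count you propose, and the extra machinery is precisely there to cope with the absence of the half-space inclusion that made Theorem~\ref{main2}'s proof short.
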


 We need to rule out two cases: {\bf (i)} $\nu_{\Omega^*}(y_0) \cdot \nu_{\overline{\mathcal{F}}}(x_0)=0$; and {\bf (ii)} $\nu_{\Omega^*}(y_0) \cdot \nu_{\Omega}(x_0)=0.$

\subsection{Case (i)}
By a translation of coordinates we may assume $x_0=0.$
Denote by $e_i$, $i=1, \cdots, n$ the standard coordinate directions of $\mathbb{R}^n.$
 Up to a rotation of coordinates we may assume $\nu_{\overline{\mathcal{F}}}(0) =e_n$ and $\nu_{\Omega^*}(y_0)=e_1.$ Denote by $H_{\mathcal{F}}(0)$ the tangent hyperplane of $\overline{\mathcal{F}}$ at $0.$ Denote by $H_{\Omega}(0)$ the tangent hyperplane of $\Omega$ at $0.$ 
Let $H':=H_{\mathcal{F}}(0)\cap H_{\Omega}(0).$ 
By the interior ball property, we have that 
\begin{equation}\label{itf1}
B_{|y_0|}(y_0)\cap \Omega\subset U.
\end{equation}
 Note that $H_{\mathcal{F}}(0)$ is also tangent to $\partial B_{|y_0|}(y_0).$ By Theorem \ref{main2}, we have 
 \begin{equation}\label{itf2}
 \nu_{\Omega}(0) \cdot \nu_{\overline{\mathcal{F}}}(0)>-1.
 \end{equation}
 We will prove the following lemma.
\begin{lemma}\label{localgood}
There exists a $n-2$ dimensional $C^2$ submanifold   of $\mathbb{R}^n$ (denoted by $\mathcal{M}$)  passing through $0,$ such that its tangent space at $0$ is contained in $H'$ and 
$\mathcal{M}\cap B_{r_0}(0)\subset \overline{U}$ for some small $r_0>0.$
\end{lemma}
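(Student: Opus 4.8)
\textbf{Proof plan for Lemma \ref{localgood}.}

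The plan is to produce the submanifold $\mathcal{M}$ as the preimage under $Dv$ of a suitable piece of $\partial\Omega^*$ passing through $y_0$. First I would exploit the setup of Case (i): since $\nu_{\Omega^*}(y_0)=e_1$ is orthogonal to $\nu_{\overline{\mathcal{F}}}(0)=e_n$, the interior ball $B_{|y_0|}(y_0)$ of \eqref{itf1} is tangent to $H_{\mathcal{F}}(0)=\{x_n=0\}$ along a whole $(n-1)$-plane through $0$, and the target boundary $\partial\Omega^*$ near $y_0$ is a graph over $H_{\mathcal F}(0)$-type directions with $e_1$ as its normal. The idea is that the $C^2$ uniform convexity of $\Omega^*$ lets us slice $\partial\Omega^*$ near $y_0$ by the $2$-plane spanned by $e_1$ and $e_n$ (or rather take the complementary $(n-2)$-dimensional slice): set $\Sigma:=\partial\Omega^* \cap \{y_1 = \varphi(y'', 0)\text{-type condition}\}$, concretely the $(n-2)$-dimensional submanifold of $\partial\Omega^*$ obtained by intersecting with the hyperplane $\{y_n = (y_0)_n\}$, which is transverse to $\partial\Omega^*$ near $y_0$ because $\nu_{\Omega^*}(y_0)=e_1\perp e_n$; this $\Sigma$ is $C^2$, passes through $y_0$, and has tangent space at $y_0$ contained in $H'':=e_1^\perp\cap e_n^\perp$.

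Next I would define $\mathcal{M}:=Dv(\Sigma)$, using part $i)$ of Theorem \ref{CMCL}, which guarantees $Dv$ is a homeomorphism from $\overline{\Omega^*}$ onto $\overline{U}$; this immediately gives $\mathcal{M}\subset\overline U$ and, choosing $\Sigma\cap B_\rho(y_0)$ for $\rho$ small and using continuity of $Dv$, gives $\mathcal{M}\cap B_{r_0}(0)\subset\overline{U}$ for suitable $r_0$. The substantive points are then (a) that $\mathcal{M}$ is a genuine $C^2$ submanifold of dimension $n-2$, and (b) that its tangent space at $0$ lies in $H'=H_{\mathcal F}(0)\cap H_\Omega(0)$. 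For (a) one needs regularity of $Dv$ near $y_0$: here I would invoke the tangential $C^{1,1-\epsilon}$ information from Corollary \ref{co21}(ii) together with the strict convexity \eqref{strictconvex} of $v$ on $\overline{\Omega^*}$ near $y_0$, which together with Lemma \ref{locallemma1} force $v$ to be smooth enough in the tangential directions that $Dv$ restricted to the tangential slice $\Sigma$ is a $C^1$ (indeed better) immersion — the key being that the directions tangent to $\Sigma$ are all tangential to $\partial\Omega^*$, so no loss of regularity from the free-boundary/fixed-boundary corner occurs along them.

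For (b), the tangent space computation: differentiating along $\Sigma$, the tangent space of $\mathcal{M}$ at $0$ is $D^2v(y_0)$ applied to $T_{y_0}\Sigma\subset H''$. I would argue that this image lies in $H'$ by a separating-hyperplane / convexity argument rather than by explicit computation of $D^2v$. Namely, any vector $w\in T_0\mathcal{M}$ arises as the velocity of a $C^1$ curve in $\overline U\subset\Omega$, so $w\in H_\Omega(0)$ by convexity of $\Omega$ and $0\in\partial\Omega$ (using \eqref{np4}-type one-sidedness); and $w$ is also a limit of difference quotients $\frac{Dv(p_k)-0}{|p_k-y_0|}$ with $p_k\in\Sigma$, which by the interior ball estimate \eqref{np5}-\eqref{np6} and the fact that $\Sigma$ stays in $\{y_n=(y_0)_n\}$ (so the $e_n$-component of $p_k-q_k$ is controlled by second-order terms exactly as in the proof of Theorem \ref{main2}) forces the $e_n$-component of $w$ to vanish, i.e. $w\in H_{\mathcal F}(0)$. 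Hence $w\in H'$. I expect step (b), and in particular making the regularity in step (a) rigorous at the corner point where $\partial U$ is merely Lipschitz, to be the main obstacle: one must be careful that the slicing hyperplane $\{y_n=(y_0)_n\}$ is genuinely transverse to $\partial\Omega^*$ on a definite neighborhood (which uses uniform convexity of $\Omega^*$ quantitatively, not just at $y_0$) and that $Dv$ does not degenerate along $\Sigma$, for which \eqref{Asolv} and the volume/shape estimates of Corollary \ref{co21}(i) should suffice.
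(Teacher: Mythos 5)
The paper's proof is far more elementary than what you propose, and your approach contains a gap that I don't think can be closed with the tools available at this stage. The paper simply takes $\mathcal{M}:=\partial\Omega\cap\partial B_{|y_0|}(y_0)$ in the transverse case $-1<\nu_\Omega(0)\cdot\nu_{\overline{\mathcal F}}(0)<1$. This is an intersection of two explicit $C^2$ hypersurfaces which, by the angle condition, meet transversally at $0$, so the implicit function theorem immediately gives a $C^2$ submanifold of dimension $n-2$; the interior ball property \eqref{itf1} gives $\mathcal{M}\cap B_{r_0}(0)\subset\overline U$ directly, and the tangent space of $\mathcal{M}$ at $0$ is $H_\Omega(0)\cap H_{\mathcal F}(0)=H'$ by construction since $\partial B_{|y_0|}(y_0)$ is tangent to $H_{\mathcal F}(0)$ at $0$. (When $\nu_\Omega(0)\cdot\nu_{\overline{\mathcal F}}(0)=1$ a parabolic comparison surface $\{x_n=C|x'|^2\}$ plays the same role.) Notice that \emph{no regularity of $Dv$ whatsoever} is invoked.

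Your plan, by contrast, defines $\mathcal{M}:=Dv(\Sigma)$ for a $C^2$ slice $\Sigma\subset\partial\Omega^*$ and then needs $Dv|_\Sigma$ to be a $C^1$ (or better) immersion, together with a well-defined $D^2v(y_0)$ acting on $T_{y_0}\Sigma$ for the tangent-space computation in step (b). This is exactly the regularity that the paper does \emph{not} yet have at such corner points -- indeed the whole point of Sections 4--6 is to establish it, so assuming it here would be circular. The tools you cite do not supply it: Corollary \ref{co21}(ii) is a one-sided growth estimate for sub-level sets in tangential directions (roughly $v\lesssim|y-y_0|^{2-\epsilon}$ on $\mathcal H$), not a modulus of continuity for $Dv$, and strict convexity \eqref{strictconvex} plus Lemma \ref{locallemma1} give geometric control of level sets, again not $C^1$ regularity of the gradient map. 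Without a priori $C^1$ regularity of $Dv$ along $\Sigma$ you cannot conclude $Dv(\Sigma)$ is a $C^2$ (or even $C^1$) submanifold, nor can you speak of its tangent space at $0$ via $D^2v(y_0)$. You correctly flag this as ``the main obstacle,'' but it is in fact a gap that cannot be filled with the ingredients on the table. The paper sidesteps it entirely by building $\mathcal M$ out of the fixed boundary and the interior ball, which are known $C^2$ objects independent of the potential's regularity.
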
 
\begin{proof}
If $ \nu_{\Omega}(0) \cdot \nu_{\overline{\mathcal{F}}}(0)=1,$  since 
$\partial \Omega$ is $C^2,$ by \eqref{itf2} we have that $B_{r_0}(0)\cap \{x_n>C|x'|^2\}\subset U$ for some small positive constant $r_0$ and some large constant $C$ depending only the $C^2$ norm of $\partial \Omega$ and the distance between $\Omega$ and $\Omega^*.$ 
Hence we can take $\mathcal{M}= \{x_n=C|x'|^2\}.$

If $-1<\nu_{\Omega}(0) \cdot \nu_{\overline{\mathcal{F}}}(0)<1,$ 
since both $\partial \Omega$ and $\partial B_{|y_0|}(y_0)$ are $C^2,$ by implicit function theorem  
we have that $\mathcal{M}:=\partial\Omega\cap\partial B_{|y_0|}(y_0)$ is an $n-2$ dimensional $C^2$ submanifold of $\mathbb{R}^n.$ By \eqref{itf1} we have that  $\mathcal{M}\cap B_{r_0}(0)\subset \overline{U}$ for a small $r_0>0$.  

Therefore, we can always find the desired $\mathcal{M}$ as in the statement of the lemma.
\end{proof}

By subtracting a constant we can also assume that $v\geq 0$ and $v(y_0)=0$.
Since $\overline{\Omega}\cap\overline{\Omega^*}=\emptyset,$ we have $y_0=re_n$ for some $r>0$.  
Let $p=(p_1, 0,\cdots, 0, p_n)$ be a point on $\partial\{v<h\}\cap \partial \Omega^*$ with $p_n<r.$ 
Denote $s=r-p_n$.    
Since $\partial \Omega^*$ is $C^2$ smooth and uniformly convex, 
we have  $p_1=as^2+o(s^2)$ for a positive constant $a.$
Similar to \cite[Lemma 5.9]{CLWf}, we have the following estimate for $s.$
\begin{lemma}\label{estfors}
For any $\epsilon>0$ small, there exist constants $C, C_\epsilon$ such that
\beq\label{hsh}
C h^{\frac{1}{3}}\leq s\leq C_ \epsilon h^{\frac{1}{3}-\epsilon}
\eeq
when $h>0$ is small, where $C>0$ is a constant independent of $\eps$.
\end{lemma}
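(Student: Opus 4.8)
The plan is to estimate the height $h$ of the sub-level set $\{v<h\}$ from both sides in terms of the parameter $s=r-p_n$, using the Monge-Amp\`ere bounds \eqref{Asolv} on $v$ together with the geometric constraints coming from Lemma \ref{locallemma1} (localisation) and the interior-ball-generated submanifold $\mathcal{M}$ from Lemma \ref{localgood}. The upper bound $s\le C_\epsilon h^{1/3-\epsilon}$ should be the easier direction: I would use the tangential $C^{1,1-\epsilon}$ regularity \eqref{tanc2} for $v$ at $y_0$, which says $B_{C_\epsilon h^{1/2+\epsilon}}(y_0)\cap\mathcal{H}\subset S_h^c[v](y_0)$, and combine it with the equivalence $S_h[v]\approx S_h^c[v]\cap\Omega^*$ from \eqref{equi0}. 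Since $\partial\Omega^*$ is $C^2$ uniformly convex, a point on $\partial\Omega^*$ at tangential distance $\rho$ from $y_0$ sits at normal depth $\approx\rho^2$ below $\mathcal{H}$; thus the boundary point $p$ with $v(p)<h$ can be pushed tangentially a distance comparable to $h^{1/2+\epsilon}$ into $S_h^c[v]$, and reading off how far along $\partial\Omega^*$ this reaches (using $p_1\approx s^2$) gives, after a short computation, $s\lesssim h^{1/3-\epsilon}$ — the exponent $1/3$ rather than $1/2$ reflecting that we are forced to travel along the curved boundary rather than straight along $\mathcal{H}$.

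The lower bound $s\ge Ch^{1/3}$ is where the real content lies, and it is where the nontransversality hypothesis of Case (i), namely $\nu_{\Omega^*}(y_0)\cdot\nu_{\overline{\mathcal{F}}}(0)=0$, together with Lemma \ref{localgood}, must be exploited. The idea is to bound the height $h$ from above by a suitable multiple of $s^3$. I would argue by contradiction-free direct estimation: consider the section $S_h[v](y_0)$; by \eqref{nm1} it has volume $\approx h^{n/2}$ and (after normalisation) good shape. The support function $\ell_{y_0}$ of $v$ at $y_0$ has gradient $Dv(y_0)=x_0=0$, so on the segment from $y_0$ to the boundary point $p$ we have $v(p)-\ell_{y_0}(p)=v(p)$; I want to show this exceeds $h$ once $s\gg h^{1/3}$. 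For this I would use that $Dv$ maps $S_h^c[v](y_0)\cap\Omega^*$ onto a set of comparable measure inside $\overline{\Omega}$ near $0$, but the image must avoid the complement of $\overline{U}$ in a neighbourhood of $0$ — and here Lemma \ref{localgood} enters: the submanifold $\mathcal{M}\subset\overline{U}$ through $0$ with tangent space in $H'$ means $\overline{U}$ near $0$ is squeezed against $H_{\mathcal{F}}(0)\cap H_\Omega(0)$ along both normal directions, forcing $Dv$ of the section to be pinched into a thin region; combined with the volume identity $|Dv(S_h^c[v])|\approx|S_h^c[v]\cap\Omega^*|\approx h^{n/2}$ this produces the constraint that $S_h^c[v]$ cannot be too large in the relevant directions, which in $\partial\Omega^*$-coordinates forces $s^3\gtrsim h$. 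The model to imitate is \cite[Lemma 5.9]{CLWf}, so I would set up the normalisation and the competitor estimates exactly as there, substituting Lemma \ref{localgood} for whatever geometric input played the analogous role in the $x_0\in\mathcal{F}$ case.

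Concretely, the sequence of steps I would carry out is: (1) fix the normalisation $v\ge 0$, $v(y_0)=0$, $x_0=0$, $Dv(y_0)=0$, and record that on $S_h^c[v](y_0)$ one has $0\le v\le Ch$ by Remark \ref{uniest11}; (2) for the upper bound on $s$, invoke \eqref{tanc2} and \eqref{equi0}, translate "tangential reach $h^{1/2+\epsilon}$" into "arclength reach along $\partial\Omega^*$", use $p_1\approx s^2$ on the uniformly convex $C^2$ boundary, and conclude $s\le C_\epsilon h^{1/3-\epsilon}$; (3) for the lower bound, normalise $S_h^c[v](y_0)$ by an affine map $\mathcal{A}$ to good shape, track the images of $\mathcal{H}$, of $\partial\Omega^*$ near $y_0$, and of the segment $y_0p$; (4) use the Monge-Amp\`ere equation $\det D^2 v\approx\chi_{S_h^c[v]\cap\Omega^*}$ on the section together with the pinching of $\overline{U}$ near $0$ implied by Lemma \ref{localgood} to bound the extent of $\mathcal{A}(S_h^c[v])$; (5) undo the normalisation to get $h\lesssim s^3$, i.e. $s\gtrsim h^{1/3}$. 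The main obstacle I anticipate is step (4): making rigorous the claim that the geometry of $\overline{U}$ near the nontransverse point $0$ — a genuine cusp in two dimensions — forces the right quantitative pinching of the image section, and ensuring the constant $C$ in the lower bound is independent of $\epsilon$ (as the statement demands). This will likely require a careful two-sided barrier comparison for $v$ near $y_0$, with the barrier's sub-level sets engineered to match the $\mathcal{M}$-geometry, in the spirit of the localisation and good-shape machinery of \cite{C96, CLW1, CLWf}.
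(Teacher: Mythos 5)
Your sketch of the upper bound $s\le C_\epsilon h^{1/3-\epsilon}$ is consistent with the paper, which simply invokes that part of \cite[Lemma 5.9]{CLWf} verbatim (tangential $C^{1,1-\epsilon}$ reach of the section in the directions $e_2,\dots,e_{n-1}$, plus the $p_1\approx s^2$ geometry of the $C^2$ uniformly convex boundary, plus the volume bound $|S^c_h[v]|\approx h^{n/2}$). However, for the lower bound $s\ge Ch^{1/3}$ — which you correctly identify as the real content — your proposed route is genuinely different from the paper's and, as written, has a serious gap.

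The paper's proof of the lower bound is a short, purely pointwise argument, and it hinges on an observation that your proposal never uses: since $0\in\mathcal{F}$ and $Dv=\mathrm{Id}$ on $\Omega\setminus U$, we have $Dv(0)=0$; combined with $Dv(y_0)=x_0=0$ and the convexity of $v$, this forces $Dv\equiv 0$ on the whole segment $\{te_n:0\le t\le r\}$, and then $v(y_0)=0$, $v\ge 0$ give $v\equiv 0$ on that segment. Writing $z=p_ne_n$ for the foot of $p$ on the $e_n$-axis, one therefore has $v(z)=0$, and the one-dimensional convexity inequality $q_1=\partial_1 v(p)\ge\frac{v(p)-v(z)}{|p-z|}=\frac{h}{p_1}\gtrsim\frac{h}{s^2}$ follows. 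The matching upper bound on $q_1$ comes from the interior ball property $|p-q|\le|p|$ together with the monotonicity of $Dv$ in the form $(p-y_0)\cdot q\ge 0$, which yields $q_n\le\frac{1}{s}p_1q_1$; plugging into $|p-q|^2\le|p|^2$ and simplifying gives $\frac{s}{r}q_1\le 2p_1\lesssim s^2$. Combining, $\frac{h}{s^2}\lesssim q_1\lesssim s$, i.e.\ $h\lesssim s^3$, with a constant independent of $\epsilon$.

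Your step (4), by contrast, asks for a volume comparison driven by a ``pinching'' of $\overline{U}$ near $0$ via Lemma \ref{localgood} and a two-sided barrier. This is not how the paper argues, and it is not clear it can be made to work: Lemma \ref{localgood} gives an $(n-2)$-dimensional submanifold inside $\overline{U}$, which (if anything) says $U$ is \emph{not} too thin in certain tangential directions, not that the image of the section is pinched. Lemma \ref{locallemma1} plays no role here either. More importantly, nothing in your proposal extracts the vanishing of $v$ along the segment $[0,y_0]$, which is where the $h/s^2$ lower bound on the normal derivative $q_1$ comes from and which is the engine of the whole estimate. Without that, there is no evident way to beat the naive bound, and the constant you would obtain is unlikely to be $\epsilon$-independent as the statement requires.
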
 

\begin{proof}
The proof is a small modification of that of \cite[Lemma 5.9]{CLWf}. We explain the necessary change here. For the second inequality, the proof is exactly same as that of \cite[Lemma 5.9]{CLWf}.
It suffices to show the first inequality of \eqref{hsh}.
Since $v\in C^1(\mathbb{R}^n)$ and $Dv=\text{Id}$ in $\Omega\setminus U$.
Hence, as $0\in\mathcal{F}\subset\partial U$,
$$Dv(0)=0=Dv(y_0). $$  
By the convexity of $v$, we infer that
$$ Dv(te_n)=0\quad \forall\,t\in[0,r].$$ 
Since  $v(y_0)=0$ and $v\geq 0$ on $\mathbb{R}^n$.
Then $v(te_n)=0$ for all $t\in[0,r]$ as well.  
In particular, we have $v(z)=0$,
where $z=p_ne_n$ is the projection of $p$ on the $x_n$ axis.
Denote $q=(q_1,\cdots, q_n) =Dv(p)\in \partial U$.
By the convexity of $v$, we have 
\begin{equation}\label{h4}
q_1=Dv(p)\cdot e_1\geq \frac{v(p)-v(z)}{|p-z|}=\frac{h}{p_1}\geq C\frac{h}{s^2} .
\end{equation}
By the interior ball property  (Lemma \ref{intball}),  we have $B_{|p-q|}(p)\cap \Omega\subset U$. 
Hence
\begin{equation}\label{h5}
|p-q|^2\leq |p-0|^2.
\end{equation}
By monotonicity of $Dv,$ we have that $(p-y_0)\cdot (q-0)\geq 0,$ which implies
\begin{equation}\label{hh1}
q_n\leq \frac{1}{s}p_1q_1.
\end{equation}
Note that this part is different from that in the proof of \cite[Lemma 5.9]{CLWf}.

Denote $q'=(q_1,q_2,\cdots,q_{n-1})$.  
Recall that $p_n=r-s$. By \eqref{h5} and \eqref{hh1}, we have
$$|q'|^2+p_1^2-2p_1q_1+\big(r-s-\frac{1}{s}p_1q_1)\big)^2\leq p_1^2+(r-s)^2, $$
from which one infers that $\frac{s}{r}|q'|^2\le 2 p_1q_1$. 
Noting that $q_1\le |q'|$, we thus obtain  
\begin{equation*}
	\frac{s}{r}q_1 \leq 2p_1.
\end{equation*}
Recall that $p_1 \leq Cs^2+o(s^2)$. By \eqref{h4}, we then deduce
$$ \frac{h}{s r} \leq Cp_1\le Cs^2,$$
from which it follows that $s\geq Ch^{\frac{1}{3}}$. 
So the first inequality of \eqref{hsh} is proved.
\end{proof}

Up to a translation of coordinates we may also assume $y_0=0.$ 
By subtracting a constant we may assume $v(0)=0, v\geq 0.$
Since
$\nu_{\Omega^*}(0) \cdot \nu_{\overline{\mathcal{F}}}(0)=0,$ we can apply the argument in \cite[Section 5.2]{CLWf} to show that $S_h^c[v](0)$ can be normalised by an affine transformation
$T_h,$ namely,  $T_h(S_h^c[v]) \sim B_1(0).$ Moreover, $T_h=T_2\circ T_1,$ where $T_2$ is an affine transform satisfying  
\begin{equation}\label{ag2}
 \|T_2\|+\|T_2^{-1}\|\leq C_\epsilon h^{-\epsilon}\quad\mbox{ for any } \epsilon>0,
 \end{equation}
 and $T_1:y \mapsto \bar y$ is the transform given by
\beq
\label{T1}
\left\{ 
{\begin{array}{ll}
  \bar y_1=h^{-\frac23}y_1,& \\
  \bar y_i= h^{-\frac12}y_i, & \ \ \ i=2,\cdots, n-1,\\
  \bar y_n=h^{-\frac13}y_n. &
  \end{array}}\right. 
\eeq

If $ \nu_{\Omega}(0) \cdot \nu_{\overline{\mathcal{F}}}(0)=1,$ by the uniform convexity and $C^2$ regularity of $\partial \Omega$, and the interior ball property we have that 
$\partial U=\{x_n=\rho(x')\}$ near 0 for some function $\rho$ satisfying $C^{-1}|x'|^2\leq \rho(x')\leq C|x'|^2,$ then
 we can use the blow up argument in 
\cite[Section 5]{CLWf} to make a contradiction. 

In the following we only need to consider the situation
$ -1<\nu_{\Omega}(0) \cdot \nu_{\overline{\mathcal{F}}}(0)<1,$ in which $H':=H_{\mathcal{F}}(0)\cap H_{\Omega}(0)$ is an $n-2$-dimensional subspace of $\mathbb{R}^n.$ 
We need to consider two subcases: 1) $e_1\in H';$ and 2) $e_1\notin H'.$

\subsubsection{Subcase 1: $e_1\in H'$}  In this subcase up a change of of $e_2, \cdots, e_{n-1}$ coordinates we may assume $H'=\text{span}\{e_1, e_3,\cdots, e_{n-1}\}.$
For any unit vector $e\in \text{span}\{e_2, \cdots, e_{n-1}\},$ let 
$p_h:=h^{\frac{1}{2}-3\epsilon}e+\rho^*(h^{\frac{1}{2}-3\epsilon}e)e_1\in \partial\Omega^*,$ where 
$\Omega^*=\{y_1>\rho^*(y_2, \cdots, y_n)\}$ near $0$ for some $C^2,$ uniformly convex function $\rho^*$ with $\rho^*(0)=0, D\rho^*(0)=0.$ 
 A direct computation shows that 
 \begin{equation}\label{blimit1}
 |T_hp_h-T_h(h^{\frac{1}{2}-3\epsilon}e)|\rightarrow 0,\ \text{as}\ h\rightarrow 0,
 \end{equation}
 and that
 \begin{equation}\label{blimit2}
 |T_h(h^{\frac{1}{2}-3\epsilon}e)|\rightarrow \infty,\ \text{as}\ h\rightarrow 0.
 \end{equation}
 By the Blaschke selection theorem and the standard technique of taking diagonal sequences,
  we have that the convex sets $T_h(\Omega^*)$ locally uniformly converges to a limit convex set
  $\Omega^*_0$ in Hausdorff distance. Up to a subsequence we may also assume that  $T_h(\text{span}\{e_2,\cdots, e_{n-1}\})$ converges to $H^*,$ which is an $n-2$ dimensional subspace of $\mathbb{R}^n.$ 
  By \eqref{blimit1} and \eqref{blimit2}, we have that $H^*\subset \overline{\Omega^*_0}.$
  Hence by convexity we have that the convex set $\Omega^*_0$ splits, namely,
  \begin{equation}\label{split01}
  \Omega^*_0=H^*\times \omega^*
  \end{equation}
  for some two dimensional convex set $\omega^*.$
  Let
	\begin{equation}\label{novh1}
		v_h(y):=\frac{1}{h}v(T_h^{-1}y).
	\end{equation}
Then, $v_h$ is locally uniformly bounded in $\R^n$ as $h\to0$. 
Hence by passing to a subsequence, $v_h\rightarrow v_0$, locally uniformly,
and $v_0$ satisfies 
\begin{equation}\label{maeq111}
\det\, D^2 v_0=c_0\chi_{_{\Omega^*_0}}\quad\text{in } \R^n
\end{equation}
for a constant $c_0>0$.
Since $v_0$ is a convex function defined on entire $\mathbb{R}^n,$ we have that
$U_0,$ the interior of $\partial v_0 (\mathbb{R}^n)$ is a convex set. Since $\Omega^*_0$ is convex, we have that $c_0\chi_{_{\Omega^*_0}}$ is doubling for any convex set centred at a point in 
$\overline{\Omega^*_0}.$ Hence, by Caffarelli's boundary regularity theory \cite{C92b}, we have that $v_0$ is $C^1$ and strictly convex on $\overline{\Omega^*_0}.$

Denote $T_h^*:=\frac{1}{h}(T^t_h)^{-1}$. 
A straightforward computation shows that the unit inner normal of $\Omega_h:=T_h^*(\Omega)$ at $0$ is given by
\begin{equation}\label{in11}
\nu_{_{\Omega_h}}(0):=\frac{T_h\nu_{\Omega}(0)}{|T_h\nu_{\Omega}(0)|}.
\end{equation}
Since
$ -1<\nu_{\Omega}(0) \cdot \nu_{\overline{\mathcal{F}}}(0)<1,$ we have that 
$\nu_{\Omega}(0)=(0, c_2, 0, \cdots, 0, c_n)$ for some constants $c_2, c_n\neq 0$ satisfying 
$c_2^2+c_n^2=1.$
By the formula of $T_1,$ we have that $T_1\nu_{\Omega}(0)=c_2h^{-\frac{1}{2}}e_2+c_nh^{-\frac{1}{3}}e_n.$
By \eqref{ag2} we have that
\begin{equation}\label{Thnu1}
\frac{T_h\nu_{\Omega}(0)}{|T_h\nu_{\Omega}(0)|}=\frac{T_2(T_1\nu_{\Omega}(0))}{|T_2(T_1\nu_{\Omega}(0))|}=\frac{T_2e_2}{|T_2e_2|}+o(1).
\end{equation}
Note that $\nu_{_{\Omega_h}}(0)$ is the unit inner normal of $\Omega_h$ at $0.$
Up to a subsequence we may assume that $\frac{T_2e_2}{|T_2e_2|}$ converges to some unit vector $e_0\in H^*$ as $h\rightarrow 0.$ 
By \eqref{Thnu1} we have $\nu_{_{\Omega_h}}(0)$ converges to $e_0$ as $h\rightarrow 0.$
Since $Dv_h(\mathbb{R}^n)\subset \overline{\Omega_h},$ we have that $Dv_h(x)\cdot \nu_{_{\Omega_h}}(0)\geq 0$ for any $x\in \mathbb{R}^n.$ Passing to the limit we have that 
$Dv_0(x)\cdot e_0\geq 0$ for any $x\in \mathbb{R}^n.$ Hence $v_0$ is monotone increasing along $e_0$ direction. Since $v_0(0)=0, v_0\geq 0,$ we have that $v_0(-te_0)=0$ for any $t>0,$ contradicts to the strict convexity of $v_0$ on $\overline{\Omega^*_0}.$

\subsubsection{Subcase 2: $e_1\notin H'$} 
For any unit vector $e\in H',$ let $p_t=te$ be a point on $H'$ with $t\leq h^{\frac{1}{2}-3\epsilon}.$
By Lemma \ref{localgood} we have that for any $t>0$ small, there exists a point 
$q_t\in \mathcal{M}\cap B_{r_0}(0)\subset \overline{U},$ such that $|q_t-p_t|\leq Ct^2\leq Ch^{1-6\epsilon}.$ 
Write 
\begin{equation}\label{limm1}
e=c_1e_1+c_2e_2+\cdots+c_{n-1}e_{n-1}.
\end{equation}
 Since $e_1\notin H',$ we have that at least
one of $c_2, \cdots, c_{n-1}$ is not zero. Without loss of generality we may assume $c_2\neq 0.$
Hence $|(T_1^t)^{-1}p_t|\geq Ch^{-3\epsilon}.$ By \eqref{ag2} we have that 
\begin{equation}\label{lim003}
|T^*_hp_t|\geq Ch^{-2\epsilon}\rightarrow \infty\ \text{as}\ h\rightarrow 0,
\end{equation}
where $T^*_h=\frac{1}{h}(T_h^t)^{-1}$.
By \eqref{T1}, \eqref{ag2} and \eqref{limm1} we have that
\begin{equation}\label{lim004}
|T^*_hq_t-T^*_hp_t|\leq Ch^{\frac{1}{3}-6\epsilon} \rightarrow 0\ \text{as}\ h\rightarrow 0,
\end{equation}
provided $\epsilon$ is chosen small.
Up to a subsequence we may assume $T^*_h H'$ converges $H,$ an $n-2$ dimensional subspace of $\mathbb{R}^n,$ locally uniformly as $h\rightarrow 0.$
By \eqref{lim003} and \eqref{lim004}, we have that $H\subset \overline{U_0}=\overline{Dv_0(\mathbb{R}^n)}.$ By convexity we also have that the convex set $U_0$ splits, namely, $U_0=H\times \omega$ for some two dimensional convex set $\omega.$ Finally we can apply the argument in \cite{CLWf} to arrive a contradiction.

\subsection{Case (ii)} 
By a translation of coordinates we may assume $x_0=y_0=0.$
Denote by $e_i$, $i=1, \cdots, n$, the standard coordinate directions of $\mathbb{R}^n.$
Up to a rotation of coordinates we may assume $\nu_{\Omega}(0) =e_n$ and $\nu_{\Omega^*}(0)=e_1$. 
Denote by $H_{\mathcal{F}}(0)$ the tangent hyperplane of $\overline{\mathcal{F}}$ at $0$. 
Denote by $H_{\Omega}(0):=\{x_n=0\}$ the tangent hyperplane of $\Omega$ at $0$. 
Let $H':=H_{\mathcal{F}}(0)\cap H_{\Omega}(0),$ we have that $H'$ is an $(n-2)$-dimensional subspace of $\mathbb{R}^n$. (Otherwise, if $H'$ is $(n-1)$ dimensional, it goes back to Case (i).)
Observe that if $e_1\in H',$ then $e_1\cdot  \nu_{\overline{\mathcal{F}}}(0)=0,$ which again can be reduced to Case (i). Hence, in the following we always assume $e_1\notin H'.$
Similar to the proof of \cite[Lemma 5.20]{CLWf}, up to an affine transformation we may assume 
$H'=\text{span}\{e_2, \cdots, e_{n-1}\}.$
Now, we can carry out a blow up argument developed in \cite{CL1} as follows.

Let 
	$$d_e:=\sup\{|y\cdot e|: y\in S_h[v]\cap \Omega^*\}$$ for any unit vector $e.$
Denote $d_{e_i}$ by $d_i$ for short.  Let $p_e\in \overline{S_h[v]\cap \Omega_1^*} $ be the point such that $p_e\cdot e=d_e.$
We have the following estimate.
\begin{lemma}\label{ide1}
$|d_1|\leq  C_{\epsilon}h^{\frac{2}{3}-\epsilon},$ $d_e^2\leq C d_1$ for any unit vector
$e\in \text{span}\{e_2, \cdots, e_n\},$
where $\epsilon$ can be as small as we want.
\end{lemma}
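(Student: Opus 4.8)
The plan is to estimate the two quantities $d_1$ and $d_e$ by exploiting the interior ball property together with the monotonicity of $Dv$, in the same spirit as Lemma \ref{estfors} and \cite[Lemma 5.20]{CLWf}. First I would recall that, by construction, $H'=\operatorname{span}\{e_2,\dots,e_{n-1}\}$ is tangent to both $\overline{\mathcal F}$ and $\partial\Omega$ at $0$, and since $0\in\mathcal F$ we have $Dv(0)=0$; combined with $v\geq 0$, $v(0)=0$ and the convexity of $v$, this forces $v$ to vanish on the segment $[0,te_n]$ for $t$ up to the distance between $\Omega$ and $\Omega^*$, so in particular $v$ vanishes on a full $n$-dimensional cone near $0$ along the $e_n$-direction up to the curvature of $\partial\Omega$. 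I would use this to locate a point $z$ on the $e_n$-axis at which $v(z)=0$, at controlled distance from $p_{e_1}$.

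Next, for the bound on $d_1$: take $p=p_{e_1}\in\overline{S_h[v]\cap\Omega^*}$, set $q=Dv(p)\in\partial U$, and let $z$ be the point on the $x_n$-axis closest to $p$ with $v(z)=0$. Convexity of $v$ gives $q\cdot e_1\geq (v(p)-v(z))/|p-z|\gtrsim h/d_1$, exactly as in \eqref{h4}. Then the interior ball property (Lemma \ref{intball}) yields $|p-q|^2\leq |p-0|^2$, and monotonicity of $Dv$ between $p$ and $0$ (using $Dv(0)=0$) together with $\nu_{\Omega^*}(0)=e_1$ and $\nu_\Omega(0)=e_n$ gives a one-sided control of the $e_n$-component of $q$ in terms of $p\cdot e_1$ and $q\cdot e_1$; feeding these back into $|p-q|^2\leq|p|^2$ and using that $\partial\Omega^*$ is $C^2$ uniformly convex (so $p\cdot e_1\approx |p'|^2\lesssim d_1$ where $p'$ is the tangential part) produces, after expanding, an inequality of the form $h/(d_1 r)\lesssim d_1$ up to $\epsilon$-losses coming from the factor \eqref{ag2}; here the upper bound $s\le C_\epsilon h^{1/3-\epsilon}$ analogue is what introduces the $h^{-\epsilon}$, yielding $|d_1|\leq C_\epsilon h^{2/3-\epsilon}$. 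The estimate $d_e^2\leq Cd_1$ for $e\in\operatorname{span}\{e_2,\dots,e_n\}$ should follow from the uniform convexity of $\partial\Omega^*$ at $0$: any $y\in S_h[v]\cap\Omega^*$ satisfies $y\cdot e_1\gtrsim |y''|^2$ with $y''$ the component orthogonal to $e_1$, while $y\cdot e_1\leq d_1$, so $|y\cdot e|^2\leq|y''|^2\lesssim d_1$.

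The main obstacle I anticipate is the bookkeeping of the $e_n$-direction in the monotonicity step. Unlike in Lemma \ref{estfors}, here the free boundary and the fixed boundary are both tangent to $H'$ but need not coincide, so the point $z$ on the $x_n$-axis at which $v$ vanishes must be chosen carefully (it is the $e_n$-axis projection adjusted by the $O(|p'|^2)$ curvature of $\partial\Omega$), and the sign conditions $p\cdot e_1>0$, $q\cdot e_n<0$ that made \eqref{hh1} work must be re-derived in this configuration from $\nu_\Omega(0)\cdot\nu_{\overline{\mathcal F}}(0)=0$ rather than assumed. Once the correct analogue of \eqref{hh1} is in place, the remaining computation is the same quadratic manipulation as in Lemma \ref{estfors}, and the $\epsilon$-loss is tracked exactly as in the derivation of \eqref{ag2} and \eqref{hsh}. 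I would present the argument by first isolating the vanishing-segment claim as a preliminary observation, then running the two-point interior-ball estimate, and finally deducing $d_e^2\lesssim d_1$ from uniform convexity of the target.
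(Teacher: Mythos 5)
Your treatment of the second estimate $d_e^2\leq Cd_1$ coincides with the paper's: both use uniform convexity of $\partial\Omega^*$ directly via $d_1\geq p_e\cdot e_1\geq C(p_e\cdot e)^2$, and this part is correct. However, for the bound $|d_1|\leq C_\epsilon h^{\frac23-\epsilon}$ you propose a fundamentally different route — a two-point interior-ball argument modeled on Lemma~\ref{estfors} — whereas the paper's proof does not invoke the interior ball property at all. The paper's argument instead uses the fact that $Dv(\mathbb{R}^n)\subset\overline\Omega\subset\{x_n\geq0\}$ forces $v$ to be nondecreasing in $e_n$: pushing $p_{e_1}$ along $-e_n$ until it meets $\partial\Omega^*$ produces a point $q\in\overline{S_h[v]}$ on $\partial\Omega^*$ with $q\cdot e_1=d_1$; by uniform convexity of $\partial\Omega^*$ the two-dimensional lune cut off in $\operatorname{span}\{e_q,e_1\}$ by the chord $[0,q]$ has $\mathcal H^2$-area $\gtrsim d_1^{3/2}$, and taking the convex hull with the tangential points $h^{\frac12+\epsilon}\tilde e_i$ and comparing with $|S^c_{bh}[v]|\approx h^{n/2}$ yields the bound. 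No interior ball, no Legendre duality, no quadratic manipulation.

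There is, moreover, a concrete gap in your plan beyond the routing difference. Your argument hinges on locating a point $z$ on the $e_n$-axis with $v(z)=0$, which would follow from $Dv$ vanishing along $[0,te_n]$ as in Lemma~\ref{estfors}. But in Case (ii) the direction of the segment on which $v$ vanishes is the transport direction $\nu_{\overline{\mathcal F}}(0)$, not $e_n=\nu_\Omega(0)$. The reduction in Case (ii) has already ruled out $e_1\in H'$ precisely so that $e_1\cdot\nu_{\overline{\mathcal F}}(0)\neq0$; hence $\nu_{\overline{\mathcal F}}(0)$ has a non-vanishing $e_1$-component, and the segment where $v=0$ is \emph{not} the $e_n$-axis, nor a small perturbation of it. You describe $z$ as "the $e_n$-axis projection adjusted by the $O(|p'|^2)$ curvature of $\partial\Omega$," but the tilt of the vanishing segment away from $e_n$ is $O(1)$, controlled by $\nu_{\overline{\mathcal F}}(0)\cdot e_1$, not a curvature correction. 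Without a legitimate source for $v(z)=0$ along the $e_n$-axis, the key inequality $q\cdot e_1\gtrsim h/d_1$ is unsupported, and the analogue of \eqref{hh1} you would need does not follow from the stated monotonicity and interior-ball facts. To repair this you would have to rework the whole estimate with $z$ on the tilted segment, at which point you essentially lose the clean separation of $e_1$ and $e_n$ that Lemma~\ref{estfors} relied on; the paper sidesteps this by using the weaker but sufficient monotonicity $\partial_n v\geq0$ plus a volume count.
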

\begin{proof}
Suppose $d_1=p_{e_1}\cdot e_1 \geq h^{\frac{2}{3}-4n\epsilon}.$
Let $q$ be the intersection of the ray $\{p_{e_1}-te_n: t\geq 0\}$ and $\partial \Omega^*,$
we have
 $q_1=d_1\geq  h^{\frac{2}{3}-4n\epsilon}.$
Since $Dv(\mathbb{R}^n)\subset \overline{\Omega}\subset\{x_n\geq 0\},$ we have that $v$ is increasing in $e_n$ direction. Hence $v(q)\leq v(p_{e_1})=h,$ which implies that $q\in \overline{S_h[v]}.$


Denote $e_q:=\frac{q-y_0}{|q-y_0|}.$ Denote by $D$ the planar region in $\text{span}\{e_q, e_1\},$ enclosed by 
$\partial\Omega_1^*\cap \text{span}\{e_q, e_1\}$ and the segment $y_0q.$
By the $C^2$ regularity and uniform convexity of $\partial\Omega_1^*$ we have that $\mathcal{H}^2(D)\geq Cd_1^{\frac{3}{2}}.$
Let $\tilde{e}_2, \cdots ,\tilde{e}_{n-1}$ be an orthonormal basis of the orthogonal complement of $\text{span}\{e_1, e_p\}$ in $\mathbb{R}^n.$ By the tangential $C^{1, 1-\epsilon}$ estimate of $v$ we have that $y_0+h^{\frac{1}{2}+\epsilon}\tilde{e}_i\in S^c_{bh}[v]$, $i=2,\cdots, n-1.$ Let $G$ be the convex envelope of 
$D$ and the points $y_0+h^{\frac{1}{2}+\epsilon}\tilde{e}_i$, $i=2,\cdots, n-1.$ By convexity we have $G\subset S^c_{bh}[v].$
Hence $$h^{(\frac{1}{2}+\epsilon)(n-2)}\mathcal{H}^2(D)\leq |G|\leq |S^c_{bh}[v]|\approx h^{\frac{n}{2}},$$ which
implies that $|d_1|\leq  C_{\epsilon}h^{\frac{2}{3}-\epsilon}.$  

Fix any unit vector
$e\in \text{span}\{e_2, \cdots, e_n\},$
by the uniform convexity of $\partial\Omega_1^*$ we have that $d_1\geq p_e\cdot e_1\geq C(p_e\cdot e)^2,$ which implies $d_e^2\leq C d_1.$ 
\end{proof}

{\bf First blow up.}  Suppose
$S_h^c[v]\sim E$ for some ellipsoid centred at $0.$ Then, $E\cap\{x_1=0\}$ is an $(n-1)$-dimensional ellipsoid with principal directions $\bar e_2, \cdots, \bar e_n.$ Note that $\text{span}\{\bar e_2, \cdots, \bar e_n\}=\text{span}\{e_2, \cdots, e_n\}.$
Now, we can write 
$$E=\left\{x=x_1e_1+\sum_{i=2}^n\bar{x}_i\bar e_i: \frac{x_1^2}{a_1^2}+\sum_{i=2}^n\frac{(\bar x_i-k_ix_1)^2}{a_i^2} \leq 1\right\}.  $$
By Lemma \ref{ide1} and the tangential $C^{1,1-\epsilon}$ estimate of $v$ at $y_0,$  
we have that  
\begin{equation}\label{lim200}
0<a_1<C_\epsilon h^{\frac{2}{3}-\epsilon},\ \text{and}\ C_\epsilon h^{\frac{1}{2}+\epsilon}<a_i< C_\epsilon h^{\frac{1}{3}-\epsilon}\ \text{for}\ i=2, \cdots, n.
\end{equation}
Let $\tilde{d}_e:=\{|x\cdot e|: x\in S_h^c[v]\}.$ By  uniform density and Lemma \ref{ide1} we have 
\begin{equation}\label{wid111}
\tilde d_e^2\leq Ca_1\ \text{for any}\ e\in \text{span}\{e_2, \cdots, e_n\}.
\end{equation}
 It follows that 
\begin{equation}\label{estwid21}
|k_i|\leq C\frac{a_1^{\frac{1}{2}}}{a_1}=Ca_1^{-\frac{1}{2}}.
\end{equation}

Let $T_1, T_2$ be the affine transformations as following.
$$T_1: x=x_1e_1+\sum_{i=2}^n\bar{x}_i\bar e_i \mapsto  z=x_1e_1+  \sum_{i=2}^n(\bar{x}_i-k_ix_1)\bar e_i;$$
$$T_2: z=z_1e_1+\sum_{i=2}^n\bar{z}_i\bar e_i \mapsto y=\frac{z_1}{a_1}e_1+\sum_{i=2}^n\frac{\bar{z}_i}{a_i}\bar e_i.$$
Then $T_h(E)=B_1,$ where $T_h=T_2T_1.$ Hence $T_hS_h^c[v]\sim B_1.$
Let $v_h(x):=\frac{1}{h}v(T_h^{-1}x).$ Then $Dv_h(x)=T_h^*Dv(T_h^{-1}x),$ 
where $T_h^*=\frac{(T_h^t)^{-1}}h=\frac{1}{h}(T_2^t)^{-1}(T_1^t)^{-1}.$
A direct computation shows that 
$$(T^t_1)^{-1}: x=x_1e_1+\sum_{i=2}^n\bar{x}_i\bar e_i \mapsto  z=(x_1+\sum_{i=2}^nk_i\bar x_i)e_1+  \sum_{i=2}^n\bar x_i\bar e_i;$$
$$(T^t_2)^{-1}: z=z_1e_1+\sum_{i=2}^n\bar{z}_i\bar e_i \mapsto y=a_1z_1e_1+\sum_{i=2}^n a_i\bar{z}_i\bar e_i.$$

Let $\mathcal{M}$ be as in Lemma \ref{localgood}. For any given unit vector 
$e\in \text{span}\{e_2,\cdots, e_{n-1}\},$ 
Let $t_h$ be the positive number such that
$|\frac{1}{h}(T_2^t)^{-1}(t_he)|=1,$ we have that
$$t_h\leq \frac{h}{\min_{2\leq i\leq n}a_i}\leq C_\epsilon h^{\frac{1}{2}-\epsilon}.$$
For any $t<h^{-\epsilon}t_h,$ denote $p_t:=te=(0, x^t_2,\cdots, x^t_{n-1},0).$
 It is straightforward
to check that 
\begin{equation}\label{lim201}
|T_h^*(h^{-\epsilon}t_he)|\rightarrow \infty, \ \text{as}\ h\rightarrow 0.
\end{equation}
By Lemma \ref{localgood}, we can find $q_t=p_t+x^t_1e_1+x^t_ne_n\in \mathcal{M}\cap B_{r_0}(0)\subset \overline{U},$ with 
\begin{equation}\label{lim202}
|x^t_1|, \ |x^t_n|\leq Ct^2\leq Ch^{1-4\epsilon}.
\end{equation}
Let $z_i=e_n\cdot \bar e_i,$ we have that $e_n=\sum_{i=2}^nz_i\bar e_i.$
Then 
$$T_h^*(x^t_1e_1+x^t_ne_n)=\frac{1}{h}x^t_n\left(a_1(\sum_{i=2}^nk_iz_i)e_1+\sum_{i=2}^na_iz_i\bar e_i\right)+\frac{1}{h}a_1x^t_1e_1.$$

Since $|z_i|\leq 1$ for $i=2,\cdots, n-1,$
by \eqref{lim200}, \eqref{estwid21} and \eqref{lim202}, a straightforward computation shows that
$$|T_h^*(x^t_1e_1+x^t_ne_n)|\leq Ch^{-4\epsilon}(h^{\frac{1}{3}-\frac{\epsilon}{2}}+h^{\frac{1}{3}-\epsilon})+h^{-4\epsilon}h^{\frac{2}{3}-\epsilon}\leq Ch^{\frac{1}{3}-5\epsilon}.$$
Hence,
\begin{equation}\label{lim203}
|T^*_h(q_t-p_t)|\rightarrow 0 \  \text{ as }\ h\rightarrow 0,
\end{equation}
provided $\epsilon$ is sufficiently small.

Let $v_h$ be as in \eqref{novh1}. Up to a subsequence we may assume that
$T_h(\Omega^*_h)$ converges to a convex set $\Omega^*_0$ locally uniformly as $h\rightarrow 0,$ and that $v_h$ converges to a convex function 
$v_0$ locally uniformly as $h\rightarrow 0,$ 
and $v_0$ satisfies \eqref{maeq111}.
Let $U_0$ be the interior of the convex set $\overline{\partial v_0(\mathbb{R}^n)}.$
Up to a subsequence we may assume $T_h^*(H')$ converges to $H$ for some $n-2$ dimensional subspace of $\mathbb{R}^n$ as $h\rightarrow 0.$ 
Note that $e_h:=\frac{T_he_n}{|T_he_n|}$ is the unit inner normal of $T_h^*(\Omega).$ Up to a subsequence we may assume that $e_h$ converges to a unit vector $e_0.$ By the definition of $T_h$ we have that $e_h\cdot e_1=0.$ Passing to limit we have that $e_0\cdot e_1=0.$
Since $T_h^*(H')$ is a subspace of the tangent space of $\partial T_h^*(\Omega)$ at $0,$ we have that $e_h$ is orthogonal to $T_h^*(H'),$ hence passing to limit we have that $e_0$ is orthogonal to $H.$ Now, by a rotation of $e_2,\cdots, e_n$ coordinates we may assume
$e_0=e_n$, $H=\text{span}\{e_2,\cdots, e_{n-1}\}.$
By \eqref{lim201} and \eqref{lim203} we have that $U_0$ splits, namely, $U_0=H\times \omega$ for some two dimensional convex set $\omega$. Moreover $U_0\subset \{x_n\geq 0\}.$ 
Similar to the proof of \cite[Lemma 3.5]{CL1} we also have that $\Omega^*_0$ is a smooth convex domain satisfying $\Omega^*_0=\{x_1\geq P(x_2, \cdots, x_n\}$ for some non-negative homogeneous quadratic polynomial satisfying $P(0)=0, DP(0)=0.$ 

Finally, we can follow the argument  in \cite{CL1} to perform a second blow up and then use the argument in \cite{CLW1} to deduce a contradiction.

\section{$C^{1, 1-\epsilon}$ estimate}
In this section we assume $\Omega, \Omega^*, f, g$ satisfy the same conditions as those in Proposition \ref{obli}. 
We will establish the $C^{1, 1-\epsilon}$ estimate of $u.$ For any $y_0\in \partial\Omega^*,$ denote
$x_0=Dv(y_0)\in \partial U.$ In the following we will establish a pointwise $C^{1, 1-\epsilon}$ estimate of $v$ at $x_0.$  After a translation of coordinates and subtracting an affine function to $v$, we may assume that $x_0=y_0=0,$ and that $v(0)=0$, $v\geq 0.$
\begin{theorem}\label{tc111}
 There exists a small constant $r>0$ independent of the location of $y_0=0$ on $\partial\Omega^*,$ such that 
$$0\leq v(y)\leq C_\epsilon |y|^{2-\epsilon}$$
for any $y\in B_{r}(0).$
\end{theorem}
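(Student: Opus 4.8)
The plan is to prove the pointwise estimate $0\le v(y)\le C_\epsilon|y|^{2-\epsilon}$ at a boundary point $y_0=0\in\partial\Omega^*$ by iterating the geometric control on the sub-level sets $S_h^c[v](0)$ that we have already assembled. The key inputs are: the localisation Lemma \ref{locallemma1} (so that $S_h^c[v]$ does not see $\overline\Omega$ and the Monge-Amp\`ere measure of $v$ inside it is comparable to Lebesgue measure on $S_h^c[v]\cap\Omega^*$), the uniform density estimate Lemma \ref{ud1}, the volume estimate $|S_h^c[v](0)|\approx h^{n/2}$ from Corollary \ref{co21}(i), the tangential $C^{1,1-\epsilon}$ estimate \eqref{tanc2}, and — crucially — the obliqueness Proposition \ref{obli}, which tells us that when $x_0=Dv(0)\in\partial\Omega\cap\overline{\mathcal F}$ the normals are genuinely transverse, and otherwise ($x_0\in\partial\Omega\setminus\overline{\mathcal F}$ or $x_0\in\mathcal F$) the obliqueness is already known from \cite{CLW1,CLWf}. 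In all cases, obliqueness is what prevents the sub-level sets from degenerating into an arbitrarily long thin slab in a direction along which $v$ stays essentially flat.

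First I would reduce to controlling the shape of $S_h^c[v](0)$: since $v$ is convex with $v(0)=0$, $v\ge0$, and $S_h^c[v](0)=\{v<L_h\}$ with $L_h\le Ch$ on $S_h^c[v](0)$ (Remark \ref{uniest11}), it suffices to show that $S_h^c[v](0)\supset B_{c_\epsilon h^{1/2+\epsilon}}(0)$ for small $h$, because then for $|y|=\rho$ one picks $h$ with $c_\epsilon h^{1/2+\epsilon}=\rho$, i.e.\ $h\approx\rho^{1/(1/2+\epsilon)}\ge\rho^{2-\epsilon'}$, and concludes $v(y)\le Ch\le C_\epsilon\rho^{2-\epsilon'}$. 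So the whole theorem comes down to an inclusion $B_{c_\epsilon h^{1/2+\epsilon}}(0)\subset S_h^c[v](0)$, equivalently a lower bound on the smallest semi-axis of the John ellipsoid of $S_h^c[v](0)$. To get this I would normalise: let $T_h$ be the affine map with $T_h(S_h^c[v](0))\sim B_1(0)$, $\tilde v_h(z)=\tfrac1h v(T_h^{-1}z)$, so $\det D^2\tilde v_h\approx\chi_{T_h(S_h^c[v]\cap\Omega^*)}$ and by uniform density $|T_h(S_h^c[v]\cap\Omega^*)|\ge\delta|B_1|$. The tangential estimate \eqref{tanc2} controls the semi-axes of $T_h^{-1}$ in the $n-1$ directions tangent to $\partial\Omega^*$ at $0$ from below by $c_\epsilon h^{1/2+\epsilon}$; what must be shown is that the remaining semi-axis — essentially in the inner normal direction $\nu_{\Omega^*}(0)$ — is also at least $c_\epsilon h^{1/2+\epsilon}$, or more precisely that the product of all semi-axes is $\approx h^{n/2}$ (volume estimate) forces no single one to be much smaller, given that the other $n-1$ are at most $C_\epsilon h^{1/3-\epsilon}$ (from the interior ball estimates / the structure analysis in Sections 3–4) and at least $c_\epsilon h^{1/2+\epsilon}$.

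The main obstacle, and the point where the obliqueness Proposition \ref{obli} does the essential work, is ruling out the scenario in which $S_h^c[v](0)$ collapses in the normal direction: a priori the sub-level set could be a flat pancake pushed against $\partial\Omega^*$, and then $v$ would grow faster than quadratically in the normal direction. This is exactly the degeneration that happens at a non-transverse intersection point, so the argument must use that $\partial_{nt}\Omega=\emptyset$ (Theorem \ref{main2}) together with $\nu_{\Omega^*}(y_0)\cdot\nu_{\overline{\mathcal F}}(x_0)>0$ and $\nu_{\Omega^*}(y_0)\cdot\nu_\Omega(x_0)>0$. Concretely, I would argue by contradiction along a sequence $h_k\to0$ for which the normal semi-axis of $S_{h_k}^c[v](0)$ is $\le h_k^{1/2+\epsilon}$ with $\epsilon$ fixed, normalise, and extract a limit $v_0$ solving $\det D^2 v_0=c_0\chi_{\Omega^*_0}$ on $\mathbb R^n$ with $\Omega^*_0$ a splitting convex set $H\times\omega$ (as in the blow-up arguments of Sections 4 and \cite{CLW1,CLWf,CL1}); the collapse would force $v_0$ to be a function of strictly fewer variables on a half-space, contradicting the strict convexity of $v_0$ on $\overline{\Omega^*_0}$ coming from Caffarelli's boundary regularity \cite{C92b}, applicable because $\chi_{\Omega^*_0}$ is doubling at boundary points of the convex domain $\Omega^*_0$. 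Once the lower bound on the normal semi-axis is secured, $S_h^c[v](0)$ has good shape modulo an $h^{\pm\epsilon}$ factor, the ball inclusion $B_{c_\epsilon h^{1/2+\epsilon}}(0)\subset S_h^c[v](0)$ follows, and the pointwise estimate $v(y)\le C_\epsilon|y|^{2-\epsilon}$ drops out by the reduction above; the uniformity of $r$ in $y_0$ follows since all constants depend only on $n,\lambda,\Omega^*$ and the $C^0$ moduli of $f,g$, not on the location of $y_0$.
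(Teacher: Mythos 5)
Your reduction (lower-bound the smallest semi-axis of $S_h^c[v](0)$, equivalently show $B_{c_\epsilon h^{1/2+\epsilon}}(0)\subset S_h^c[v](0)$, and then solve for $h$ in terms of $|y|$) is essentially the same reduction the paper makes: the paper states that to prove the theorem it suffices to show $c_h>C_\eta h^{1/2+\eta}$, where $c_h$ is the semi-axis of $S_h^c[v]$ along $e_n$. You also correctly assemble the ingredients: localisation, uniform density, the volume estimate, tangential $C^{1,1-\epsilon}$, and — the decisive input — the cone $\mathcal C_{K,r}\subset U$ supplied by Proposition \ref{obli}.

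The gap is in your contradiction mechanism. You propose to take a sequence $h_k\to0$ along which the normal semi-axis is $\le h_k^{1/2+\epsilon}$, normalise $S_{h_k}^c[v]$ to $\sim B_1$, and conclude that the blow-up limit $v_0$ ``would be a function of strictly fewer variables on a half-space, contradicting strict convexity.'' That step does not go through. After normalisation the limit $v_0$ solves $\det D^2 v_0=c_0\chi_{\Omega^*_0}$ with $\mu_{v_0}(B_1)\gtrsim 1$ (by uniform density), and $v_0$ is a non-degenerate convex function; the eccentricity of the original sub-level set is absorbed into the normalising affine map and leaves no trace of ``degeneracy'' in the normalised picture. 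Nothing forces a splitting, and the splitting argument you are importing from the obliqueness proof (Section 4) serves a different purpose there (ruling out tangential transport). The paper's Lemma 5.2 instead runs a genuinely two-scale argument: pick the \emph{extremal} $h_0$ at which $c_{h}/h^{1/2+\eta}$ first drops below a small $c_0$, blow up at the \emph{larger} scale $Mh_0$ — where, by the extremality, the normal semi-axis is still comparable to $(Mh_0)^{1/2+\eta}$ — show, using the cone \eqref{doposi} converging to a half-space and the $C^{1,1}$ theory of \cite{CLW1} on the smooth limiting domain, that the blow-up limit $v_0$ satisfies $v_0(y)\le C|y|^2$, and then derive a quantitative contradiction from a second-difference estimate: since $-C_1 c_{h_0}e_n\notin S_{h_0}^c[v]$ one must have $\Delta_h v:=v(he_n)+v(-he_n)-2v(0)\ge 2h_0$ for $h=C_1c_{h_0}$, while the convergence to the $C^{1,1}$ limit bounds $\Delta_h v\le h_0(CC_1^2 C_n^{-2}M^{-2\eta}+4\|v_{h_0}-v_0\|_\infty)$, which can be pushed below $h_0/2$ by first taking $M$ large and then $h_0$ small. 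This extremal choice of $h_0$ and the second-difference comparison at the smaller scale $c_{h_0}$ are the actual engine of the proof, and they are missing from your proposal.
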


We will show such estimate in the worst scenario when $x_0\in \overline{\mathcal{F}}\cap \partial\Omega.$ Note that near such point, $\partial U$ is only known to be Lipschitz, and it is also not known whether $\partial U$ is locally convex. 
By Proposition \ref{obli}, we can find an affine transform $A$ such that
$(A^t)^{-1}\nu_{\Omega^*}(0)$ (the inner normal direction of $A\Omega^*$ at $0$) is parallel to
$A\nu_{\Omega}(0)$ (the inner normal direction of $(A^t)^{-1}\Omega$ at $0$).
Hence up to an affine transformation we may assume $\nu_{\Omega^*}(0)=\nu_{\Omega}(0)=e_n.$
Moreover, by Proposition \ref{obli} we also have that there exists constants $K, r>0$ such that
\begin{equation}\label{doposi}
\mathcal{C}_{K, r}:=\{x_n\geq K|x'|\}\cap B_r(0)\subset U.
\end{equation}

Let $z_h=c_he_n$ be the intersection of positive $x_n$ axis and $\partial S_h^c[v].$ To prove Theorem \ref{tc111} we only need to establish the following lemma.
\begin{lemma}\label{ltc111} For any $\eta>0$ small, there exists a constant $C_\eta,$ such that
$c_h> C_\eta h^{\frac{1}{2}+\eta}.$
\end{lemma}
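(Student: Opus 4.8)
\textbf{Proof plan for Lemma \ref{ltc111}.}

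The goal is a one-sided lower bound on the height $c_h$ of the centred sub-level set $S_h^c[v]$ along the $e_n$-axis, and the natural route is a volume comparison argument combined with the interior cone $\mathcal{C}_{K,r}\subset U$ from \eqref{doposi}. I would argue by contradiction: suppose there is $\eta>0$ and a sequence $h\to 0$ with $c_h\le C_\eta h^{\frac12+\eta}$ for every constant $C_\eta$, i.e. $c_h=o(h^{\frac12+\eta})$ along the sequence. The plan is to extract geometric consequences of this smallness, normalise $S_h^c[v]$ by an affine map $T_h$, pass to a blow-up limit $v_0$ on $\mathbb{R}^n$ as in Corollary \ref{co21} and Proposition \ref{obli}'s proof, and show that $v_0$ is forced to be monotone (hence flat) along a direction inside its domain of strict convexity — contradicting Caffarelli's strict convexity for the limit Monge-Ampère equation \eqref{maeq111}, exactly the mechanism used throughout Section 4.

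First I would record what the cone $\mathcal{C}_{K,r}\subset U$ gives us about $v$. Since $Dv=\mathrm{Id}$ on $\Omega\setminus U$ and $0\in\partial U$, and $v(0)=0$, $v\ge 0$, the part of $\partial S_h^c[v]$ lying in the half-space $\{x_n\le 0\}$ (more precisely, outside the cone) is constrained: by Remark \ref{uniest11}, $v\le Ch$ on $S_h^c[v]$, and by the interior ball property (Lemma \ref{intball}) applied at points of $U\cap S_h^c[v]$ together with $Dv$ mapping into $\overline\Omega\subset\{x_n\ge 0\}$ after the affine normalisation, $v$ is monotone non-decreasing in the $e_n$-direction. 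Hence the sub-level set cannot extend in the $-e_n$ direction beyond where $v$ vanishes, and $v\equiv 0$ on the segment $[0,z_h]\cup$ (the negative $x_n$-axis piece in $\Omega\setminus U$). The smallness assumption $c_h=o(h^{\frac12+\eta})$ then says that the affine function $L_h$ defining $S_h^c[v]=\{v<L_h\}$ has a very large slope in the $e_n$-direction: since $L_h(0)=h$ and $L_h$ vanishes at $\partial S_h^c[v]$ near $z_h$, we get $\partial_{e_n}L_h \gtrsim h/c_h \gg h^{\frac12-\eta}$. Comparing with the tangential $C^{1,1-\epsilon}$ estimate \eqref{tanc2}, which forces $S_h^c[v]$ to contain a ball of radius $\approx h^{\frac12+\epsilon}$ in the tangent hyperplane, and with the volume estimate $|S_h^c[v]|\approx h^{n/2}$ from \eqref{nm1}, one sees the set must be very thin in the $e_n$-direction while being macroscopic in the tangential directions and having the right total volume — this is the geometric tension I want to exploit.

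Next I would set up the normalisation: choose the affine map $T_h$ with $T_h(S_h^c[v])\sim B_1(0)$, write $v_h(y)=h^{-1}v(T_h^{-1}y)$ as in \eqref{novh1}, and use the uniform density Lemma \ref{ud1} to ensure $T_h$ has controlled eccentricity relative to $\Omega^*$; the crucial point is that the assumed smallness of $c_h$ forces the $e_n$-scaling factor in $T_h$ to be of order $\gg h^{-\frac12-\eta}$, much larger than the tangential scaling factors which are $\lesssim h^{-\frac12-\epsilon}$ for $\epsilon<\eta$. Then, tracking the image under $T_h^*=h^{-1}(T_h^t)^{-1}$ of the cone $\mathcal{C}_{K,r}$ — exactly as in \eqref{lim201}–\eqref{lim203} — I expect the images $T_h^*(\mathcal{C}_{K,r})$ to spread out to fill a half-space direction, so that the blow-up body $U_0=\mathrm{int}\,\overline{\partial v_0(\mathbb{R}^n)}$ contains a full line in the $e_n$-direction; by convexity $U_0$ splits, $U_0=\mathbb{R}e_n\times\omega$. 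Simultaneously, the limit $\Omega_0^*=\lim T_h(\Omega^*)$ stays a genuine (non-degenerate) convex body because the tangential directions are only scaled by $h^{-1/2}$-type factors and $\partial\Omega^*$ is $C^2$ uniformly convex, so $v_0$ satisfies $\det D^2 v_0 = c_0\chi_{\Omega_0^*}$ and, by Caffarelli's boundary regularity \cite{C92b}, is $C^1$ and strictly convex on $\overline{\Omega_0^*}$. But $U_0=\partial v_0(\mathbb{R}^n)$ splitting off a line means $Dv_0$ never takes values with an $e_n$-component of both signs, so $v_0$ is monotone in $e_n$; combined with $v_0(0)=0$, $v_0\ge 0$ this gives $v_0\equiv 0$ on a ray, contradicting strict convexity.

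\textbf{Main obstacle.} The delicate step is the second one merged into the third: quantitatively matching the assumed rate $c_h=o(h^{\frac12+\eta})$ against the tangential lower bound \eqref{tanc2} and the volume normalisation so that, after applying $T_h^*$, the image of the \emph{Lipschitz, possibly non-convex} active region $\partial U$ near $0$ — for which we only know the cone inclusion \eqref{doposi}, not an interior ball of comparable size — still forces the line to lie in $\overline{U_0}$. One has to choose $\epsilon\ll\eta$ in \eqref{tanc2} and in the bound \eqref{ag2} for the auxiliary transform, show $|T_h^*(t_he)|\to\infty$ along tangential directions $e$ while $|T_h^*(q_t-p_t)|\to 0$ for the nearby cone points $q_t$, and rule out that the $e_n$-scaling degenerates $\Omega_0^*$; this bookkeeping, rather than any single conceptual leap, is where the proof does its real work, and it parallels closely the blow-up computations already carried out in Section 4 and in \cite{CLWf, CL1}.
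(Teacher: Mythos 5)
Your proposed contradiction does not close. You want to import the mechanism of Section~4: show $U_0$ splits off a full $e_n$-line so that $v_0$ is monotone along $e_n$, hence $v_0\equiv 0$ on the ray $\{-te_n : t>0\}$, and conclude this violates Caffarelli's strict convexity of $v_0$ on $\overline{\Omega_0^*}$. That mechanism only produces a contradiction when the flat ray actually lies in $\overline{\Omega_0^*}$ --- as happened in Section~4, Subcase~1, where the degenerate direction $e_0\in H^*\subset\overline{\Omega^*_0}$. In the setting of Lemma~\ref{ltc111} the direction along which $v_0$ degenerates is the outward normal of $\Omega_0^*$ at $0$, so the flat ray leaves $\overline{\Omega_0^*}$ immediately, and strict convexity on $\overline{\Omega_0^*}$ says nothing about $v_0$ there. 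An explicit compatible model is $v_0(y)=\frac12|y|^2$ for $y_n\ge 0$ and $v_0(y)=\frac12|y'|^2$ for $y_n<0$: this is globally convex, strictly convex on $\{y_n\ge 0\}$, solves $\det D^2 v_0=\chi_{\{y_n>0\}}$, has $\overline{Dv_0(\mathbb{R}^n)}=\{x_n\ge 0\}$, and vanishes identically on the negative $y_n$-axis. In fact $v$ itself already vanishes on the segment from $y_0$ toward $x_0$ before any blow-up; flatness on that exterior ray is not pathological. A second inaccuracy: the cone $\mathcal{C}_{K,r}$ opens up under the rescaling (tracked via the dual ellipsoid $E^*$ and the factorisation of the normalising map through a linear isometry) to fill a half-space $\{x\cdot e_0\ge 0\}$, so $U_0$ is a half-space, not the cylinder $\mathbb{R}e_n\times\omega$ you posit.

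The paper's proof diverges from yours precisely after the blow-up, and that is the missing ingredient. Once $U_0=\{x\cdot e_0>0\}$ and $\Omega_0^*$ are identified as smooth convex domains, it invokes the global regularity of \cite{CLW1} (whose hypotheses are now restored by the blow-up) to obtain the quadratic growth bound $0\le v_0(y)\le C|y|^2$. It then closes with a quantitative second-difference comparison rather than a qualitative degeneracy: choosing the maximal $h_0$ with $c_{h_0}=c_0 h_0^{1/2+\eta}$, one has $\Delta_h v:=v(he_n)+v(-he_n)-2v(0)\ge 2h_0$ with $h=C_1 c_{h_0}$, while the convergence $v_{h_0}\to v_0$ and the bound $v_0\le C|y|^2$ give $\Delta_h v\le h_0\bigl(C M^{-2\eta}+4\|v_{h_0}-v_0\|_{L^\infty(B_1)}\bigr)<2h_0$ after first taking $M$ large and then $h_0$ small. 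The quantitative dependence on $\eta$ through $M^{-2\eta}$ is essential and has no counterpart in the qualitative strict-convexity contradiction you sketch; your version of the contradiction hypothesis (a sequence with $c_h=o(h^{1/2+\eta})$) also misses the maximal-$h_0$ choice that makes this second-difference estimate work.
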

\begin{proof}
Suppose to the contrary that for some $\eta>0$ small one cannot find such $C_\eta,$ namely, for 
any given $c_0>0,$ there exists $h>0$ such that $c_h\leq c_0 h^{\frac{1}{2}+\eta}.$ Let 
$h_0:=\max\{h: c_h\leq c_0 h^{\frac{1}{2}+\eta}.$ By taking $c_0\rightarrow 0,$ it is straightforward to check that $h_0\rightarrow 0.$ Hence $c_{h_0}=c_0 h_0^{\frac{1}{2}+\eta}.$

Let $M$ be a large constant to be determined later.
There exists a constant $C$ depending only on $n,$ such that  $S_{Mh_0}^c[v]\subset CMS^c_{h_0}[v].$
Suppose 
$S_{Mh_0}^c[v]\sim E$ for some ellipsoid centred at $0.$ Then, $E\cap\{x_n=0\}$ is an $(n-1)$-dimensional ellipsoid. By a rotation of $x_1, \cdots, x_{n-1}$ coordinates we may assume that the principal directions of $E\cap\{x_n=0\}$ are $e_1, \cdots, e_{n-1}.$ We can write
$$E=\left\{\sum_{i=1}^nx_i e_i: \sum_{i=1}^{n-1}\frac{( x_i-k_ix_1)^2}{a_i^2} + \frac{x_n^2}{a_n^2} \leq 1\right\}.  $$
By tangential $C^{1, 1-\epsilon}$ of $v$ we have that
\begin{equation}\label{contra11}
a_i\geq C_\epsilon h_0^{\frac{1}{2}+\epsilon}, \ \text{for}\ i=1,\cdots, n-1.
\end{equation}
Let $p_{h_0}=t_{h_0}e_n$ be the intersection of the positive $x_n$ axis and $\partial E.$
By the contradiction assumption  we also have
\begin{equation}\label{contra22}
t_{h_0}\geq C_nc_0(Mh_0)^{\frac{1}{2}+\eta}.
\end{equation}

Let $E^*$ be the dual ellipsoid of $E,$ namely, 
\begin{equation}\label{contra33}
E^*:=\{y: y\cdot x\leq h\ \text{for any}\ x\in E\}.
\end{equation}
Then, $E^*\cap\{x_n=0\}$ is an $(n-1)$-dimensional ellipsoid with principal directions $\bar e_1, \cdots, \bar e_{n-1}.$ Note that $\text{span}\{\bar e_1, \cdots, \bar e_{n-1}\}=\text{span}\{e_1, \cdots, e_{n-1}\}.$ We can now write
$$E^*=\left\{y=y_ne_n+\sum_{i=1}^{n-1}\bar{y}_i\bar e_i: \sum_{i=1}^{n-1}\frac{(\bar y_i-k'_iy_n)^2}{b_i^2}+\frac{y_n^2}{b_n^2}\leq 1\right\}.  $$
By \eqref{contra22} and \eqref{contra33} we have that
\begin{equation}\label{contra44}
b_n\geq \frac{C}{c_0}h_0^{\frac{1}{2}-\eta}.
\end{equation}
By \eqref{contra11} we have that for any $e\in \text{span}\{e_1, \cdots, e_{n-1}\},$
\begin{equation}\label{contra55}
\sup\{y\cdot e: y\in E^*\}\leq \frac{1}{C_\epsilon}h_0^{\frac{1}{2}-\epsilon} \quad \forall\,\epsilon>0.
\end{equation}
Hence the following estimates holds:
\begin{equation}\label{contra66}
|k_i'|\leq \frac{\frac{1}{C_\epsilon}h_0^{\frac{1}{2}-\epsilon}}{b_n}\leq \frac{\frac{1}{C_\epsilon}h_0^{\frac{1}{2}-\epsilon}}{\frac{C}{c_0}h_0^{\frac{1}{2}-\eta}}\leq Ch_0^{\eta-\epsilon}
\rightarrow 0\ \text{as}\ h_0\rightarrow 0,
\end{equation}
provided $\epsilon<\eta.$
By \eqref{contra55} we also have
\begin{equation*}
|b_i|\leq  \frac{1}{C_\epsilon}h_0^{\frac{1}{2}-\epsilon}.
\end{equation*}
Hence 
\begin{equation}\label{contra77}
\frac{b_n}{b_i}\geq C_\epsilon h_0^{\epsilon-\eta}  \rightarrow \infty \ \text{as}\ h_0\rightarrow 0\ \text{for}\ i=1,\cdots, n-1.
\end{equation}

Let $A_{h_0}$ be an affine transformation such that $A_{h_0}E=B_1(0),$ then $A_{h_0}^*(E^*)=B_1(0),$ where
$A_{h_0}^*:=\frac{1}{h_0}(A_{h_0}^t)^{-1}.$ 
Let $T_{h_0}$ be the affine transform
$$T_{h_0}: x=x_1e_1+\sum_{i=2}^n\bar{x}_i\bar e_i \mapsto  z=\frac{x_n}{b_n}e_n+  \sum_{i=1}^{n-1}\frac{\bar{x}_i-k'_ix_n}{b_i}\bar e_i.$$
Then, $T_{h_0}E^*=B_1(0).$ Hence $A^*_{h_0}T_{h_0}^{-1}$ is an affine transform satisfying 
$A^*_{h_0}T_{h_0}^{-1}(B_1(0))=B_1(0),$ which implies that $A^*_{h_0}T_{h_0}^{-1}$ is a linear isometry from $\mathbb{R}^n$ to $\mathbb{R}^n.$
By \eqref{contra66} and \eqref{contra77}, a direct computation shows that $T_{h_0}\mathcal{C}_{K, r}$ converges to the upper half space $\{x_n\geq 0\}$ locally uniformly in Hausdorff distance as $h\rightarrow 0.$
Since $A_{h_0}T_{h_0}^{-1}$ is a linear isometry, we have that 
\begin{equation}\label{contra88}
A^*_{h_0}\mathcal{C}_{K, r}=A^*_{h_0}T_{h_0}^{-1}T_{h_0}\mathcal{C}_{K, r}\rightarrow \{x\in \mathbb{R}^n: x\cdot e_0\geq 0\}
\end{equation}
 locally uniformly in Hausdorff distance for some unit vector $e_0,$ as $h_0\rightarrow 0.$

Similar to the proof of \cite[Lemma 3.5]{CL1},
up to a subsequence we may assume $A_{h_0}(\Omega^*)$ locally uniformly converges to a smooth convex set $\Omega_0^*$ as $h_0\rightarrow 0.$
Let
	\begin{equation}\label{novvh1}
		v_{h_0}(y):=\frac{1}{Mh_0}v(A_{h_0}^{-1}y).
	\end{equation}
Then, $v_h$ is locally uniformly bounded in $\R^n$ as $h\to0$. 
Hence by passing to a subsequence, $v_{h_0}\rightarrow v_0$, locally uniformly,
and $v_0$ satisfies 
\begin{equation}\label{maeq222}
\det\, D^2 v_0=c_0\chi_{_{\Omega^*_0}}\quad\text{in } \R^n
\end{equation}
for a constant $c_0>0$.
Since $v_0$ is a convex function defined on entire $\mathbb{R}^n,$ we have that
$U_0,$ the interior of $\partial v_0 (\mathbb{R}^n)$ is a convex set.  By \eqref{contra88} we have that 
$U_0=\{x\in \mathbb{R}^n: x\cdot e_0> 0\},$ in particular $\partial U_0$ is smooth.
Since both $\Omega^*_0$ and $U_0$ are smooth convex, $v_0$ is a convex solution to \eqref{maeq222} satisfying $v_0(0)=0, v_0\geq 0,$ hence by the regularity theory of \cite{CLW1} we have that $0\leq v_0(x)\leq C|x|^2$ for any $x\in \Omega_0^*.$ Hence 
$B_{ch^{\frac{1}{2}}}(0)\cap \Omega^*_0\subset S_{h}[v_0].$ Since 
$S_{\frac{1}{C}h}[v_0]\subset S_{h}^c[v_0]\cap \Omega^*_0$ and $|S_{h}[v_0]|\approx h^{\frac{n}{2}},$ we have that  $S_{h}^c[v_0]\sim B_{h^{\frac{1}{2}}}(0).$
Since $v_0(y)\leq Ch$ for any $y\in S_{h}^c[v_0],$ it follows that
\begin{equation}\label{contra99}
v_0(y)\leq C|y|^2
\end{equation}
for any $y\in B_1(0).$ 

There exists a constant $C_1$ depending only on $n,$ such that $-C_1c_{h_0}e_n\notin S^c_{h_0}[v].$
Let $h=C_1c_{h_0}=C_1c_0h_0^{\frac{1}{2}+\eta}.$
Hence
 \begin{equation}\label{c11contra1}
 \Delta_hv:=v(he_n)+v(-he_n)-2v(0)\geq 2h_0.
 \end{equation}
Denote $e_{h_0}=A_{h_0}(t_{h_0}e_n).$ 
Then 
\begin{eqnarray*}
\Delta_hv&=&Mh_0\left(v_{h_0}(t_{h_0}^{-1}he_{h_0})+v_{h_0}(-t_{h_0}^{-1}he_{h_0})-2v_{h_0}(0)\right)\\
&\leq& Mh_0\left(Ct_{h_0}^{-2}h^2+4\|v_{h_0}-v_0\|_{L^\infty(B_1(0))}\right)\\
&\leq& h_0\left(CM\frac{C_1^2c_0^2h_0^{1+2\eta}}{C_n^2c_0^2M^{1+2\eta}h_0^{1+2\eta}}+4\|v_{h_0}-v_0\|_{L^\infty(B_1(0))} \right)\\
&=&h_0\left(CC_1^2C_n^{-2}M^{-2\eta}+4\|v_{h_0}-v_0\|_{L^\infty(B_1(0))}\right).
\end{eqnarray*}
By first taking $M$ sufficiently large we can have $CC_1^2C_n^{-2}M^{-2\eta}\leq \frac{1}{4}.$ Then
by taking $h_0$ sufficiently small and  we can have $4\|v_{h_0}-v_0\|_{L^\infty(B_1(0))}\leq \frac{1}{4}.$ 
Now $\Delta_hv\leq h_0(\frac{1}{4}+\frac{1}{4})=\frac{1}{2}h_0,$ contradicting to \eqref{c11contra1}.

Therefore, we can always find the desired $C_\eta,$ such that $c_h> C_\eta h^{\frac{1}{2}+\eta}.$
\end{proof}

\section{$W^{2,p}$ estimate}
By Theorem \eqref{tc111}, we have that $B_{C_\epsilon h^{\frac{1}{2}+\epsilon}}(0)\cap \Omega^*\subset S_h[v].$ 
Then, since  $|S_h[v]|\approx h^{\frac{n}{2}},$ we have that 
$$B_{C_\epsilon h^{\frac{1}{2}+\epsilon}}(0)\cap \Omega^*\subset S^c_h[v]\subset B_{C_\epsilon h^{\frac{1}{2}-\epsilon}}(0).$$
This implies $v(y)\geq C_\epsilon |y|^{2+\epsilon}.$
Since $u=v^*,$ we have that $u(x)\leq C_\epsilon |x|^{2-\epsilon}$ for any $x\in \overline{U}.$
 Hence, $u\in C^{1,1-\epsilon}(\overline{U}).$ 
 The proof of Theorem \ref{main2} can then be completed following the proof of \cite[Theorem 1.2]{CLW1}.

\section{Counter example for $C^{1, 1}$ regularity}

The $W^{2, p}$ regularity of $u$ in $\overline{U}$ is sharp in the sense that we can construct examples showing that even assuming further that the densities are smooth, the solution $u$ is not global $C^{1,1}$ in $U.$ The examples can be constructed as follows.
Let $p_0=(2, 0)\in \mathbb{R}^2.$  Let $\Omega=B_{1}(0), \Omega^*=B_{\frac{1}{100}}(p_0)$ and 
$f=\chi_{\Omega}, g=\chi_{\Omega^*}.$ Let $u$ be the solution to the obstacle problem \eqref{mao}.
First, we claim that 
\begin{equation}\label{example111}
U\subset B_1(0)\cap \{x_1> 0\}.
\end{equation}
  Suppose not, then there exists $p\in U \cap \{x_1\leq 0\}.$ Denote $q_0:=(1, 0).$  Since $\int_{B_{\frac{1}{10}}(q_0)\cap \Omega}f>\int_{\Omega^*}g$
and $$\text{dist}(p, \Omega^*)>\sup\{|x-y|: x\in B_{\frac{1}{10}}(q_0)\cap \Omega, y\in \Omega^*\},$$ 
we can construct a cheaper transport plan by
 replacing the mass in $U$ near $p$ by the mass in $\left(B_{\frac{1}{10}}(q_0)\cap \Omega\right)\backslash U,$ which is a contradiction. Let $x_0$ be a point in $\overline{\mathcal{F}}\cap \partial\Omega.$ 
 By \eqref{normalformular} we have that $\nu_{\overline{\mathcal{F}}}(x_0)=\frac{Du(x_0)-x_0}{|Du(x_0)-x_0|}.$ Then by Theorem \ref{main2} and \eqref{example111}, it is straightforward to verify that 
 $-1<\nu_{\overline{\mathcal{F}}}(x_0)\cdot \nu_{\Omega}(x_0)<1.$

Now we adapt an argument of Savin and Yu \cite{SY, SY1} to show that $u\notin C^{1, 1}(\overline{U}).$
Suppose to the contrary $u\in C^{1, 1}(\overline{U}).$ By a translation of coordinates and subtracting 
an affine function, we may assume $x_0=0, Du(x_0)=0, u\geq 0.$ Up to an affine transform we may assume $\nu_{\overline{\mathcal{F}}}(0)=\nu_{\Omega^*}(0)=e_2.$ Since $\partial\Omega,  \overline{\mathcal{F}}$ are all at least $C^1$ regular, then $\lambda  U$ converges to a cone $\mathcal{C}:=\{tz: t>0, z=(\cos\theta, \sin\theta), \theta\in (0, \theta_0)\}$ as $\lambda\rightarrow \infty,$ where $0<\theta_0<\pi.$ By Proposition \ref{obli} we actually have $\frac{\pi}{2}<\theta_0<\pi.$
 Since $\partial\Omega^*$ is $C^2$ we have that $\lambda\Omega^*$ converges to $\{x_2>0\}$ as $\lambda\rightarrow\infty.$
Now, let $u_h(x):=\frac{1}{h}u(h^{\frac{1}{2}}x).$ Then up to a subsequence $u_h$  converges to a  convex function $u_0$ locally uniformly as $h\rightarrow 0,$ with $u_0\in C^{1,1}(\mathcal{C})$ and $Du_0(\mathcal{C} )=\{x_2>0\}.$ Let $v_0(y):=\sup_{x\in \mathcal{C}}x\cdot y-u_0(y),$ then
$v_0\in C^{1,1}(\{x_2\geq 0\}),$ $\det D^2v_0=1$ in $\{x_2>0\}$ and $Dv_0(\{x_2>0\})=\mathcal{C}.$

Fix any unit vector $e,$ let $p_k\in \{x_2>0\}$ be a sequence such that $\partial_{_{ee}}v_0(p_k)$ converges to 
$\sup_{x\in \{x_2>0\}}\partial_{_{ee}}v_{0}(x).$ Let $v_{0k}(x)=\frac{1}{|p_k|^2}v_0(|p_k| x).$ Note that quadratic rescaling preserves second derivatives.
  By compactness, we may also assume $\frac{p_k}{|p_k|}$ converges to a point $p_\infty\in \{x_2\geq 0\}$  as $k\rightarrow \infty.$ Note that $|p_\infty|=1.$
Up to a subsequence we may assume $v_{0k}$ converges to a convex function $\tilde{v}_0$ locally uniformly as $k\rightarrow \infty.$ Now $\tilde{v}_0$ satisfies 
$\tilde v_0\in C^{1,1}(\{x_2\geq 0\}),$ $\det D^2\tilde v_0=1$ in $\{x_2>0\}$ and $D\tilde v_0(\{x_2>0\})=\mathcal{C}.$ Moreover, the maximum of $\partial_{_{ee}}\tilde{v}_0$ is attained at 
$p_\infty\in \{x_2\geq 0\}.$ If $p_\infty\in \{x_2>0\},$ then by maximum principle we have $\partial_{_{ee}}\tilde{v}_0$ is constant in $\{x_2> 0\}.$ If $p_\infty \in \{x_2=0\},$ since $|p_\infty|=1,$ we have 
$p_\infty=(1, 0)$ or $p_\infty=(-1, 0).$ In either case, up to an affine transform we may assume 
$D\tilde{v}_0\cdot e_2=0$ in $B_{\frac{1}{2}}(p_\infty)\cap \{x_2=0\}.$ Then we can extend $\tilde v_0$ to the entire $B_{\frac{1}{2}}(p_\infty)$ by reflection, namely let
$\tilde{v}_0(x_1, x_2)=\tilde{v}_0(x_1, -x_2)$ whenever $x_2<0.$ Then $\tilde v_0$ is a strictly convex function in $B_{\frac{1}{2}}(p_\infty)$ satisfies $\det D^2 \tilde v_0=1$ in  $B_{\frac{1}{2}}(p_\infty).$  Now $\partial{_{ee}}\tilde{v}_0$ attains its maximum in an interior point $p_\infty,$ by maximum principle again we have that $\partial_{_{ee}}\tilde{v}_0$ is constant in $\{x_2>0\}.$ 

Therefore, $\tilde{v}_0$ is a quadratic polynomial and hence $D\tilde{v}_0$ maps half space to half space, which contradicts to $D\tilde v_0(\{x_2>0\})=\mathcal{C}.$  Hence, the contradiction assumption fails, namely $u\notin C^{1, 1}(\overline{U}).$


\vskip10pt

\bibliographystyle{amsplain}

\end{document}